\documentclass[12pt]{article}

\usepackage{amsmath,amsthm, amssymb, latexsym}
\usepackage{amsfonts}
\usepackage{graphicx}
\usepackage{multicol}
\usepackage{caption}
\usepackage{color}
\usepackage{amsmath}
\usepackage{multicol}
\usepackage{float}
\usepackage{bbold}
\usepackage{color}
\usepackage{subfigure}
\usepackage{url}
\usepackage{tabularx}
\usepackage{xcolor}
\usepackage{grffile}
\usepackage{natbib}
\usepackage[english]{babel}
\usepackage{enumerate}

\setlength{\textwidth}{16cm} \setlength{\textheight}{22.5cm}
\setlength{\topmargin}{-1.5cm} \setlength{\oddsidemargin}{-1mm}
\setlength{\abovedisplayskip}{3mm}
\setlength{\belowdisplayskip}{3mm}
\setlength{\abovedisplayshortskip}{0mm}
\setlength{\belowdisplayshortskip}{2mm}
\setlength{\baselineskip}{12pt}
\setlength{\normalbaselineskip}{12pt} \normalbaselines
\pagestyle{plain} \raggedbottom
\newtheorem{Theorem}{Theorem}[section]
\newtheorem{Definition}[Theorem]{Definition}
\newtheorem{Proposition}[Theorem]{Proposition}
\newtheorem{Lemma}[Theorem]{Lemma}
\newtheorem{Corollary}[Theorem]{Corollary}

\newtheorem{Remark}[Theorem]{Remark}
\newtheorem{Example}[Theorem]{Example}

\newcommand{\N}{\mathbb{N}}
\newcommand{\Z}{\mathbb{Z}}
\newcommand{\R}{\mathbb{R}}

\newcommand{\E}{\mathbb{E}}

\begin{document}

\title{Inheritance of strong mixing and weak dependence under renewal sampling}
\author{\small{Dirk-Philip Brandes, Imma Valentina Curato and Robert Stelzer\footnote{Ulm University, Institute of Mathematical Finance, Helmholtzstra\ss e 18, 89069 Ulm, Germany. E-mails: dirk-philip.brandes@alumni.uni-ulm.de, imma.curato@uni-ulm.de, robert.stelzer@uni-ulm.de. }}}

\maketitle

\textwidth=160mm \textheight=225mm \parindent=8mm \frenchspacing
\vspace{3mm}

\begin{abstract}
Let $X$ be a continuous-time strongly mixing or weakly dependent process and $T$ a renewal process independent of $X$ with inter-arrival times $\tau$. We show general conditions under which the sampled process $(X_{T_i},T_i-T_{i-1})^{\top}$ is strongly mixing or weakly dependent. Moreover, we explicitly compute the strong mixing or weak dependence coefficients of the renewal sampled process and show that exponential or power decay of the coefficients of $X$ is preserved (at least asymptotically). Our results imply that essentially all central limit theorems available in the literature for strongly mixing or weakly dependent processes can be applied when renewal sampled observations of the process $X$ are at disposal. 
\end{abstract}

{\it MSC2020 : Primary 60G10, 62D05; 
Secondary 60F05, 60G60}  
\\
{\it Keywords: renewal sampling, strong mixing, weak dependence} 

\section*{Introduction}
Time series are ubiquitous in many applications, and it is often the case that the time interval separating two successive observations of the latter is itself random.  We approach the study of such time series by using a continuous-time stationary process $X=(X_t)_{t \in \R}$ and a renewal process $T=(T_i)_{i \in \Z}$ which models the sampling scheme applied to $X$. We assume that $X$ is strongly mixing or weakly dependent as defined, respectively, in \cite{R56} and \cite{DD08} and that $T$ is a process independent of $X$ with inter-arrival time sequence $\tau=(\tau_i)_{i\in \Z\setminus \{0\}}$. In this general model set-up, we show under which assumptions the renewal sampled process $Y=(Y_i)_{i \in \Z}$ defined as $Y_i=(X_{T_i},T_i-T_{i-1})^{\top}$ inherits strong mixing and weak dependence.

In the literature, the statistical inference methodologies based on renewal sampled data seldom employ a strongly mixing or weakly dependent process $Y$. To the best of our knowledge, the only existing example of this approach can be found in \cite{ASM04} where it is shown that $Y$ is $\rho$-strongly mixing, and this property is used to study the consistency of maximum likelihood estimators for continuous-time diffusion processes.  On the contrary, there exist several statistical estimators whose asymptotic properties heavily rely on ad hoc tailor-made arguments for specific models $X$. Examples of the kind appears in spectral density estimation theory. \cite{LM92}, \cite{M78a}, \cite{M78b}, and \cite{M83} study non-parametric and parametric estimators of the spectral density of $X$ employing an aliasing-free sampling scheme defined through a renewal process, see \cite{LM92} for a general definition of this set-up. Such schemes like the Poisson one allow overcoming the aliasing problem, typically observed when working with a not band limited process. Moreover, the spectral density estimators determined using renewal sampled data are consistent and asymptotically normally distributed once assumed that $X$ has finite moments of all orders. Renewal sampled data are also used to define a spectral density estimator for Gaussian processes in \cite{BB10}, kernel density estimators for strongly mixing processes in \cite{M88}, non-parametric estimators of volatility and drift for scalar diffusion in \cite{CT16}, and parametric estimators of the covariance function of $X$ as in \cite{MW79} and \cite{BC18}. In the latter, an estimator of the covariance function of a Gauss-Markov process and a continuous-time L\'evy driven moving average are respectively analyzed. In \cite{BC18}, in particular, the asymptotic properties of the estimator are obtained by an opportune truncation of a L\'evy driven moving average process $X$ that is proven to be strongly mixing. 

Determining conditions under which the process $Y$ inherits the asymptotic dependence of $X$ significantly widens the applicability of renewal sampled data. Just as indicative examples, our analysis should enable the use of renewal sampled data to study spectral density estimators as in \cite{R84}, Whittle estimators as in \cite{BD08}, and generalized method of moments estimators as in \cite{CS19} and \cite{SS19}. Moreover, the knowledge of the asymptotic dependence of $Y$ allows to apply well-established asymptotic results for $\alpha$-mixing processes like the ones in Chapter 10 of \cite{B07}, \cite{DR00}, and \cite{K17}. The latter are, respectively, functional and triangular array central limit theorems. The same argument applies to central limit theorems for weakly dependent processes like the ones presented in \cite{BS05}, \cite{DD03}, and \cite{DW07}. In brief, understanding the dependence structure of the process $Y$ allows to obtain joint asymptotic results for $(X_{T_i},T_i-T_{i-1})_{i \in \Z}$ which enable inference on the process $X$ also when the distribution of the sequence $\tau$ is not known, i.e., when the sampling scheme is not designed by an experimenter but just observed from the data. An example of the latter application appears in Theorem 5.2 of \cite{BC18}.

We study in this paper the inheritance of $\eta$, $\lambda$, $\kappa$, $\zeta$, $\theta$-weak dependence and $\alpha$-mixing which are extensively analyzed in the monographs of \cite{DD08}, \cite{B07} and \cite{D94}, respectively. For any positive integer $u,v$, indexes $i_1 \leq \ldots \leq i_u < i_u+r \leq j_1 \ldots \leq j_v$ with $r > 0$, and functions $F$ and $G$ belonging to specific functional spaces-- i.e., $F$ and $G$ are bounded Lipschitz or bounded measurable functions -- weakly dependent and $\alpha$-mixing processes both satisfy covariance inequalities of the following type,
\[
|Cov(F(X_{i_1},\ldots,X_{i_u}),G(X_{j_1},\ldots,X_{j_v}))|\leq c \,\Psi(\|F\|_{\infty},\|G\|_{\infty},Lip(F),Lip(G),u,v) \, \epsilon(r),
\]
where the sequence of coefficients $\epsilon=(\epsilon(r))_{r \in \R^+}$ converges to zero at infinity, $c$ is a constant independent of $r$, and the function $\Psi(\cdot)$ has different shapes depending on the functional spaces where $F$ and $G$ are defined. Hence, throughout, we use a unified formulation of weak dependence and $\alpha$--mixing. We call a process $\Psi$-weakly dependent if it satisfies such a covariance inequality, and we call $\epsilon$ the sequence of the $\Psi$-coefficients. Note that such a sequence of coefficients corresponds to weak dependence or $\alpha$-mixing coefficients -- as defined in Section 2.2 of \cite{DD08} and Definition 3.5 of \cite{B07} -- depending on the function $\Psi(\cdot)$.

Some formulae for $\alpha$, $\beta$, $\phi$ and $\rho$-mixing coefficients for the process $(X_{T_i})_{i \in \Z}$ have been previously obtained by \cite{CR07}, but their line of proof does not automatically extend to weak dependence, see Remark \ref{charlot} for more details. Moreover, the authors do not show when the convergence to zero of the coefficients is attained and that $(X_{T_i})_{i \in \Z}$ actually inherits strong mixing of $X$.  In Theorem \ref{main}, we give a general proof for the computation of the $\Psi$-coefficients related to the renewal process $Y$, which applies to weakly dependent and $\alpha$-mixing processes alike. Moreover, we present several sampling schemes for which the convergence to zero of the $\Psi$-coefficients is realized, and then the renewal process $Y$ inherits the dependence structure of $X$. In particular, under the additional condition that $X$ admits exponential or power decaying coefficients, we show that the $\Psi$-coefficients related to $Y$ preserve the exponential or power decay (at least asymptotically). 

The paper is organized as follows. We present in Section \ref{sec1} the definition of $\Psi$-weakly dependent processes, which encompasses weakly dependent and $\alpha$-mixing processes. In Section \ref{sec2}, we explicitly compute the $\Psi$-coefficients of the process $Y$. Moreover, we present data sets for which the independence between a process $T$, modeling the random sampling scheme, and $X$ is realistic.
Finally, in Section \ref{sec3}, we show that if the underlying process $X$ admits exponential or power decaying $\Psi$-coefficients, then the process $Y$ is $\Psi$-weakly dependent and has coefficients with (at least asymptotically) the same decay. This section includes several examples of renewal sampling. In particular, Poisson sampling times are discussed. Section \ref{sec4} concludes.

\section{Weak dependence and strong mixing conditions}
\label{sec1}
We assume that all random variables and processes are defined on a given probability space $(\Omega, \mathcal{A},\mathbb{P})$.

We refer by $\N^*$ to the set of positive integers, by $\N$ to the set of the non-negative integers, by $\Z$ to the set of all integers, and by $\R_+$ to the set of the non-negative real numbers. 
We denote the Euclidean norm by $\| \cdot \|$.  However, due to the equivalence of all norms, none of our results depends on the chosen norm.

Although the theory developed below is most relevant for sampling processes defined in continuous time, we work with a general index set $\mathcal{I}$ as this makes no difference and also covers other cases, like a sampling of discrete-time processes or random fields.
\begin{Definition}
	\label{index}
	The index set $\mathcal{I}$ is denoting either $\Z$, $\R$, $\Z^m$ or $\R^m$. Given $H$ and $J \subseteq \mathcal{I}$, we define $d(H,J)=min\{ \|i-j\|, i\in H,j\in J\}$.
\end{Definition}

Even if our theory extends to random fields, we always refer to $X$ as a process to lighten the reading. 

Moreover, we consider
\begin{equation}
	\label{set}
	\mathcal{F}=\bigcup_{u \in \N} \mathcal{F}_u \,\,\, \quad \textrm{and} \quad \,\,\, \mathcal{G}=\bigcup_{v \in \N} \mathcal{G}_v
\end{equation}
where $\mathcal{F}_u$ and $\mathcal{G}_v$ are respectively two classes of measurable functions from $(\R^d)^u$ to $\R$ and $(\R^d)^v$ to $\R$ that we specify individually later on.
Finally, for a function that is unbounded or not Lipschitz, we set its $\|\cdot\|_{\infty}$ norm or Lipschitz constant equal to infinity.

\begin{Definition}
	\label{gen}
	Let $\mathcal{I}$ be an index set as in Definition \ref{index}, $X=(X_t)_{t\in \mathcal{I}}$ be a process with values in $\R^d$ and $\Psi$ a function from $\overline{\R_+^6}$ to $\R_+$ non-decreasing in all arguments. The process $X$ is called $\Psi$-weakly dependent if there exists a sequence of coefficients $\epsilon=(\epsilon(r))_{r \in \R_+}$ converging to $0$ and satisfying the following inequality
	
	\begin{equation}
		\label{def}
		|Cov(F(X_{i_1},\ldots,X_{i_u}),G(X_{j_1},\ldots,X_{j_v}))|\leq c \, \Psi(\|F\|_{\infty},\|G\|_{\infty},Lip(F),Lip(G),u,v) \, \epsilon(r)
	\end{equation}
	for all
	\begin{equation*}
		\left\{
		\begin{array}{l}
			(u,v) \in \N^* \times \N^*;\\
			r\in \R_+; \\
			I_u=\{i_1,\ldots,i_u\} \subseteq{\mathcal{I}} \,\, \textrm{and}\,\, J_v=\{j_1,\ldots,j_v\} \subseteq \mathcal{I},  \textrm{such that}\,\,d(I_u,J_v) \geq r;\\
			\textrm{functions} \,\, F \colon (\R^{d})^u \to \R \,\, \textrm{and}\,\, G\colon (\R^{d})^v \to \R \,\,\textrm{belonging respectively to $\mathcal{F}$ and $\mathcal{G}$},
		\end{array}
		\right.
	\end{equation*}
	where $c$ is a constant independent of $r$.
\end{Definition}
W.l.o.g. we always consider $\epsilon$ as non-increasing sequence of coefficients. \\

In \cite{BS98}, the first covariance inequality for Lipschitz functions of positively or negatively associated random fields appears in the literature. Since this result, other covariance inequalities have been determined for functions $F$ and $G$ being either bounded Lipschitz or bounded measurable functions of processes and random fields. In the latter set-up, Definition \ref{gen} encompasses the so-called weak dependence conditions as described in Definition 5.12 of \cite{BS07} and Definition 2.2 of \cite{DD08} for $\mathcal{I}=\mathbb{Z}, \mathbb{Z}^m$. Therefore, several sequences of coefficients $\epsilon$ satisfying Definition \ref{gen} are already well-known.

\begin{itemize}
	\item Let $\mathcal{F}=\mathcal{G}$ and $\mathcal{F}_u$ be the class of bounded Lipschitz functions from $(\R^d)^u$ to $\R$  with respect to the distance $\delta$ on $(\R^d)^u$ defined by
	\begin{equation}
		\label{dist}
		\delta(x^*,y^*)= \sum_{i=1}^u \|x_i-y_i\|,
	\end{equation}
	where $x^*=(x_1,\ldots,x_u)$ and $y^*=(y_1,\ldots,y_u)$ and $x_i,y_i \in \R^d$ for all $i=1,\ldots,u$.
	Then, $ Lip(F)=\sup_{x\neq y} \frac{|F(x)-F(y)|}{\| x_1-y_1 \|+\|x_2-y_2\|+ \ldots+ \|x_d-y_d\|}$. For 
	\begin{equation*}
		\label{eta}
		\Psi(\|F\|_{\infty},\|G\|_{\infty},Lip(F),Lip(G),u,v)=u Lip(F) \|G\|_{\infty} +v Lip(G) \|F\|_{\infty},
	\end{equation*}
	$\epsilon$ corresponds to the \textbf{$\eta$-coefficients} as defined by \cite{DL99}. An extension of this definition for $\mathcal{I}=\mathbb{Z}^m$ is given by \cite{DL02}. If instead, \begin{align*}
		\label{lambda}
		\Psi(\|F\|_{\infty},\|G\|_{\infty},Lip(F),Lip(G),u,v)=&u Lip(F) \|G\|_{\infty} +v Lip(G) \|F\|_{\infty}\\ &+uv Lip(F)Lip(G) \nonumber,
	\end{align*}
	then $\epsilon$ corresponds to the \textbf{$\lambda$-coefficients} as defined by \cite{DW07} for $\mathcal{I}=\Z$ and in Remark 2.1 of \cite{DD08} for $\mathcal{I}=\Z^m$ . 
	Moreover, for 
	\begin{equation*}
		\label{kappa}
		\Psi(\|F\|_{\infty},\|G\|_{\infty},Lip(F),Lip(G),u,v)= uv Lip(F) Lip(G),
	\end{equation*}
	$\epsilon$ corresponds to the \textbf{$\kappa$-coefficients},
	and, for
	\begin{equation*}
		\label{zeta}
		\Psi(\|F\|_{\infty},\|G\|_{\infty},Lip(F),Lip(G),u,v)=min(u,v) Lip(F) Lip(G),
	\end{equation*}
	$\epsilon$ corresponds to the \textbf{$\zeta$-coefficients} as defined by \cite{DL99}. The definition of $\zeta$-weak dependence for $\mathcal{I}=\mathbb{Z}^m$ can be found in \cite{BS01}.
	\item Let $\mathcal{F}_u$ be the class of bounded measurable functions from $(\R^d)^u$ to $\R$ and $\mathcal{G}_v$ be the class of bounded Lipschitz functions from $(\R^d)^v$ to $\R$ with respect to the distance $\delta$ defined in (\ref{dist}). Then, for
	\begin{equation*}
		\label{theta}
		\Psi(\|F\|_{\infty},\|G\|_{\infty},Lip(F),Lip(G),u,v)= v \| F\|_{\infty} Lip(G),
	\end{equation*}
	$\epsilon$ corresponds to the \textbf{$\theta$-coefficients} as defined by \cite{DD08}. An extension of this definition for $\mathcal{I}=\mathbb{Z}^m$ appears in Remark 2.1 of \cite{DD08}. Moreover, an alternative definition for this notion of dependence is given by \cite{DD03} for $\mathcal{F}_u$ as above and $\mathcal{G}_1$ the class of Lipschitz function from $\R^d$ to $\R$, for $\mathcal{I}=\Z$.
\end{itemize}

The extension to index sets $\mathcal{I}=\R,\R^m$ of the weak dependence notions described above is straightforward.

\begin{Remark} 
	The weak dependence conditions can all be alternatively formulated by further assuming that $F \in \mathcal{F}$ and $G \in \mathcal{G}$ are bounded by one. For more details on this issue, see \cite{DL99} and \cite{DD03}. Therefore, an alternative definition of $\Psi$-weak dependence exists where the function $\Psi$ in Definition (\ref{def}) does not depend on $\|F\|_{\infty}$ and $\|G\|_{\infty}$. In this case, $\|F\|_{\infty}$ and $\|G\|_{\infty}$ are always bounded by one and therefore omitted in the notation. 
\end{Remark}

\noindent
We now show that Definition \ref{gen} also encompasses strong mixing.
We first define the strong mixing coefficient of \cite{R56}. 

We suppose that $\mathcal{A}_1$ and $\mathcal{A}_2$ are sub-$\sigma$-fields of $\mathcal{A}$ and define
\[
\alpha(\mathcal{A}_1, \mathcal{A}_2):= \sup_{\scriptsize \begin{array}{l} 
		A \in \mathcal{A}_1\\
		B \in \mathcal{A}_2
\end{array}} |P(A \cap B) - P(A)P(B)|.
\]
Let $\mathcal{I}$ a set as in Definition \ref{index}, then a process $X=(X_t)_{t \in \mathcal{I}}$ with values in $\R^d$ is said to be $\alpha_{u,v}$-mixing for $u,v\in\N\cup\{\infty\}$ if 
\begin{equation}
	\label{sup}
	\alpha_{u,v}(r):=\sup \{ \alpha(\mathcal{A}_{\Gamma_1}, \mathcal{B}_{\Gamma_2}): \Gamma_1, \Gamma_2\subseteq \mathcal{I}, |\Gamma_1| \leq u, |\Gamma_2| \leq v, d(\Gamma_1,\Gamma_2) \geq r \},
\end{equation}
converges to zero as $r \to \infty$, where  $\mathcal{A}_{\Gamma_1}=\sigma( X_i : i \in \Gamma_1)$ and $\mathcal{B}_{\Gamma_2}=\sigma( X_j: j \in \Gamma_2)$. If we let $\alpha(r)=\alpha_{\infty,\infty}(r)$, it is apparent that $\alpha_{u,v}(r) \leq \alpha(r)$. If $\alpha(r) \to 0$ as $r \to \infty$, then $X$ is simply said to be \textbf{$\alpha$-mixing}. For a comprehensive discussion on the coefficients $\alpha_{u,v}(r)$, $\alpha(r)$ and their relation to other strong mixing coefficients we refer to \cite{B07}, \cite{B88}, and \cite{D98}.

\begin{Proposition}
	\label{mixing2}
	Let $\mathcal{I}$ be a set as in Definition \ref{index} and $X=(X_t)_{t\in \mathcal{I}}$ be a process with values in $\R^d$ and $\mathcal{F}=\mathcal{G}$ where $\mathcal{F}_u$ is the class of bounded measurable functions from  $(\R^d)^u$ to $\R$. $X$ is $\alpha$-mixing if and only if there exists a sequence $(\epsilon(r))_{r \in \R_+}$ converging to $0$ such that
	
	\begin{equation}
		\label{mixalpha}
		|Cov(F(X_{i_1},\ldots,X_{i_u}),G(X_{j_1},\ldots,X_{j_v}))| \leq c \,  \Psi(\|F\|_{\infty},\|G\|_{\infty},Lip(F),Lip(G),u,v) \, \epsilon(r),
	\end{equation}
	where
	\begin{equation}
		\label{alpha}
		\Psi(\|F\|_{\infty},\|G\|_{\infty},Lip(F),Lip(G),u,v)=\|F\|_{\infty}\|G\|_{\infty},
	\end{equation}
	for all
	\begin{equation*}
		\left\{
		\begin{array}{l}
			(u,v) \in \N^* \times \N^*;\\
			r\in \R_+; \\
			I_u=\{i_1,\ldots,i_u\} \subseteq \mathcal{I} \,\, \textrm{and}\,\, J_v=\{j_1,\ldots,j_v\} \subseteq \mathcal{I},  \textrm{such that}\,\,d(I_u,J_v) \geq r\\
			\textrm{functions} \,\, F \colon (\R^{d})^u \to \R \,\, \textrm{and}\,\, G\colon (\R^{d})^v \to \R \,\,\textrm{belonging respectively to $\mathcal{F}$ and $\mathcal{G}$}
		\end{array}
		\right.
	\end{equation*}
	and where $c$ is a constant independent of $r$. 
	
\end{Proposition}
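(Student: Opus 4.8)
\emph{Proof plan.} The two implications are proved by rather different means. For the forward direction I would invoke the classical covariance inequality for strongly mixing $\sigma$-fields: for sub-$\sigma$-fields $\mathcal{A}_1,\mathcal{A}_2\subseteq\mathcal{A}$ and bounded random variables $U$ ($\mathcal{A}_1$-measurable) and $V$ ($\mathcal{A}_2$-measurable),
\[
|Cov(U,V)|\le 4\,\|U\|_{\infty}\,\|V\|_{\infty}\,\alpha(\mathcal{A}_1,\mathcal{A}_2)
\]
(see, e.g., \cite{D94} or \cite{B07}). Assuming $X$ is $\alpha$-mixing, and given $r\in\R_+$, sets $I_u,J_v\subseteq\mathcal{I}$ with $d(I_u,J_v)\ge r$ and bounded measurable $F,G$, applying this with $U=F(X_{i_1},\dots,X_{i_u})$ (which is $\mathcal{A}_{I_u}$-measurable) and $V=G(X_{j_1},\dots,X_{j_v})$ (which is $\mathcal{B}_{J_v}$-measurable), together with $\alpha(\mathcal{A}_{I_u},\mathcal{B}_{J_v})\le\alpha_{u,v}(r)\le\alpha(r)$, gives (\ref{mixalpha}) with $\Psi$ as in (\ref{alpha}), $c=4$ and $\epsilon(r)=\alpha(r)$, which tends to $0$ by hypothesis (and is already non-increasing).

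For the converse, assume (\ref{mixalpha})--(\ref{alpha}) hold for some $\epsilon(r)\to0$. Fix $r\in\R_+$, $\Gamma_1,\Gamma_2\subseteq\mathcal{I}$ with $d(\Gamma_1,\Gamma_2)\ge r$, and $A\in\mathcal{A}_{\Gamma_1}$, $B\in\mathcal{B}_{\Gamma_2}$; the goal is to bound $|P(A\cap B)-P(A)P(B)|=|Cov(\mathbb{1}_A,\mathbb{1}_B)|$ by $c\,\epsilon(r)$. The algebra $\mathcal{C}_1=\bigcup\{\sigma(X_i:i\in I):I\subseteq\Gamma_1\text{ finite}\}$ generates $\mathcal{A}_{\Gamma_1}$, so by the standard approximation theorem for measures, for any $\varepsilon>0$ there are a finite $I\subseteq\Gamma_1$ and a Borel set $C\subseteq(\R^d)^{|I|}$ with $A'=\{(X_i)_{i\in I}\in C\}$ (after fixing an enumeration of $I$) satisfying $P(A\triangle A')<\varepsilon$, and likewise a finite $J\subseteq\Gamma_2$ and $B'=\{(X_j)_{j\in J}\in D\}$ with $P(B\triangle B')<\varepsilon$. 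Since $\mathbb{1}_{A'}=F((X_i)_{i\in I})$ and $\mathbb{1}_{B'}=G((X_j)_{j\in J})$ with $F=\mathbb{1}_C$, $G=\mathbb{1}_D$ bounded by $1$, and $d(I,J)\ge d(\Gamma_1,\Gamma_2)\ge r$, inequality (\ref{mixalpha}) gives $|Cov(\mathbb{1}_{A'},\mathbb{1}_{B'})|\le c\,\epsilon(r)$. Using the elementary estimate $|Cov(U,V)-Cov(U',V')|\le 2\|V\|_{\infty}\|U-U'\|_{1}+2\|U'\|_{\infty}\|V-V'\|_{1}$ for bounded $U,V,U',V'$, together with $\|\mathbb{1}_A-\mathbb{1}_{A'}\|_1=P(A\triangle A')<\varepsilon$ (and similarly for $B$), we obtain $|Cov(\mathbb{1}_A,\mathbb{1}_B)|\le c\,\epsilon(r)+4\varepsilon$. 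Letting $\varepsilon\downarrow0$ and then taking the supremum over $A,B$ and over admissible $\Gamma_1,\Gamma_2$ yields $\alpha(r)\le c\,\epsilon(r)\to0$, so $X$ is $\alpha$-mixing.

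I expect the only genuinely delicate point to be the approximation step in the converse, in particular when $\Gamma_1$ or $\Gamma_2$ is uncountable (as for $\mathcal{I}=\R$): one must use that every event in $\sigma(X_i:i\in\Gamma_1)$ depends on at most countably many coordinates, so that $\mathcal{C}_1$ does generate $\mathcal{A}_{\Gamma_1}$ and the cylinder approximation is legitimate. Everything else --- the $4\alpha$ inequality and the $L^1$-continuity of the covariance --- is standard, and I would not anticipate further obstacles.
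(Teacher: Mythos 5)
Your argument is correct, and the forward implication coincides with the paper's: both rest on the covariance inequality $|Cov(U,V)|\le 4\|U\|_{\infty}\|V\|_{\infty}\,\alpha(\mathcal{A}_1,\mathcal{A}_2)$ (Theorem 17.2.1 in \cite{IL}) together with the bound $\alpha(\mathcal{A}_{I_u},\mathcal{B}_{J_v})\le\alpha(r)$ from (\ref{sup}), giving $c=4$ and $\epsilon=\alpha$. The converse is where you diverge: the paper simply cites the known identity that $\alpha(\mathcal{A}_1,\mathcal{A}_2)$ equals the supremum of $|Cov(F,G)|/(4\|F\|_{\infty}\|G\|_{\infty})$ over bounded measurable functions, together with Remark 3.17(ii) in \cite{B07} to reduce the supremum in (\ref{sup}) to finite index sets, and concludes $\alpha(r)\le\tfrac{c}{4}\epsilon(r)$ in two lines. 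You instead reprove the substance of those citations by hand: you test (\ref{mixalpha}) on indicator functions of finite-dimensional cylinder sets, approximate arbitrary $A\in\sigma(X_i:i\in\Gamma_1)$ and $B\in\sigma(X_j:j\in\Gamma_2)$ by such cylinders via the generating algebra $\bigcup_{I\subseteq\Gamma_1\text{ finite}}\sigma(X_i:i\in I)$, and pass to the limit using the $L^1$-continuity of the covariance, arriving at the slightly weaker but equally sufficient bound $\alpha(r)\le c\,\epsilon(r)$. Your route is self-contained and makes explicit the only delicate point (that events in $\sigma(X_i:i\in\Gamma_1)$ for uncountable $\Gamma_1$ are still approximable by finite-dimensional cylinders), which the paper delegates to Bradley; the paper's route is shorter and yields the sharper constant $c/4$. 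Since only the convergence $\epsilon(r)\to 0$ matters, the difference in constants is immaterial, and both proofs establish the proposition.
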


\begin{proof}
	Set $\mathcal{A}_{I_u}=\sigma(X_i: i \in I_u)$ and $\mathcal{B}_{J_v}=\sigma(X_j: j \in I_v)$.
	For arbitrary $(u,v) \in \N^* \times \N^*$ and $r \in \R_+$, let $I_u=\{i_1,\ldots,i_u\}$ and $J_v=\{j_1,\ldots,j_v\}$ be arbitrary subsets of $\mathcal{I}$ such that $d(I_u,J_v) \geq r$. Moreover, choose arbitrary $F \in \mathcal{F}_u$ and $G \in \mathcal{G}_v$, by Theorem 17.2.1 in \cite{IL}, it holds that
	\[
	|Cov(F(X_{i_1},\ldots,X_{i_u}),G(X_{j_1},\ldots,X_{j_v}))| \leq 4 \, \alpha(\mathcal{A}_{I_u},\mathcal{B}_{I_v}) \, \|F\|_{\infty}\|G\|_{\infty}.
	\]
	
	Definition (\ref{sup}) immediately implies that the right hand side of the inequality above is smaller than or equal to $4 \alpha(r) \,\|F\|_{\infty} \,\|G\|_{\infty}$. Hence, if $X$ is $\alpha$-mixing then (\ref{mixalpha}) holds with $\epsilon(r)=\alpha(r)$ and $c=4$.
	
	We assume now that the sequence $X$ is $\Psi$-weakly dependent with $\Psi$ given by (\ref{alpha}). By Theorem 17.2.1 in \cite{IL} and Remark 3.17(ii) in \cite{B07}, we can rewrite the definition of the $\alpha$-coefficients as
	\[
	\alpha(r)= \sup_{\scriptsize\begin{array}{l} 
			\Gamma_1, \Gamma_2 \subseteq \mathcal{I}\\ 
			|\Gamma_1|< \infty, |\Gamma_2|<\infty\\ 
			d(\Gamma_1,\Gamma_2) \geq r
	\end{array}}\alpha(\mathcal{A}_{\Gamma_1},\mathcal{A}_{\Gamma_2}) 
	\]
	\begin{equation}
		=\sup_{(u,v) \in \N\times \N} \sup_{\scriptsize \begin{array}{l}I_u, J_v \subseteq \mathcal{I}\\ d(I_u,J_v) \geq r \end{array} } \sup_{\scriptsize\begin{array}{l}F \in \mathcal{F}_u\\ G \in \mathcal{G}_v \end{array}} \Big \{ \frac{1}{4 \|F\|_{\infty} \|G\|_{\infty}} |Cov(F(X_{i_1},\ldots,X_{i_u}),G(X_{j_1},\ldots,X_{j_v}))| \Big \}.
	\end{equation}
	Hence,
	\[
	\alpha(r) \leq \frac{c}{4} \epsilon(r).
	\]
	If $X$ is $\Psi$-weakly dependent, then $X$ is $\alpha$-mixing.

\end{proof}

\begin{Remark}[$\theta$-lex weak dependence]
\label{lex}
The novel definition of $\theta$-lex weak dependence on $\mathcal{I}=\R^m$ appearing in \cite{CSS20} can be obtained by a slight modification of Definition 1.2. We use the notion of lexicographic order on $\R^m$: for distinct elements $y=(y_1,\ldots,y_m)\in\R^m$ and $z=(z_1,\ldots,z_m)\in\R^m$ we say $y<_{lex}z$ if and only if $y_1<z_1$ or $y_p<z_p$ for some $p\in\{2,\ldots,m\}$ and $y_q=z_q$ for $q=1,\ldots,p-1$. 
	\begin{itemize}
		\item Let $\mathcal{F}_u$ be the class of bounded measurable functions from $(\R^d)^u$ to $\R$ and $\mathcal{G}_1$ be the class of bounded Lipschitz functions from $\R^d$ to $\R$ with respect to the distance $\delta$ defined in (\ref{dist}). Moreover, let $I_u=\{i_1, \ldots, i_u\} \subset \R^m$, and $j \in \R^m$ be such that $i_s<_{lex} j$ for all $s=1,\ldots,u$, and $dist(I_u,j)\geq r$. Then, inequality (\ref{def}) holds for $\Psi(\|F\|_{\infty},\|G\|_{\infty},Lip(F),Lip(G),u,1)= \| F\|_{\infty} Lip(G),$ and $\epsilon$ corresponds to the \textbf{$\theta$-lex-coefficients}. 
	\end{itemize}
For $\mathcal{I}=\mathbb{Z}^m$, this notion of dependence is more general than $\alpha_{\infty,1}$-mixing as defined in (\ref{sup}), i.e., it applies to a broader class of models. Further, for $\mathcal{I}=\Z$,  $\theta$-lex weak dependence is more general than the notion of $\alpha$-mixing. We refer the reader to Section 2 of \cite{CSS20} for a complete introduction to $\theta$-lex weak dependence and its properties.
\end{Remark}

\begin{Remark}[Association]
	Association offers a complementary approach to the analysis of processes and random fields; see \cite{BS07} for a comprehensive introduction on this topic. Moreover, association is equivalent to $\zeta$-weak dependence under the assumptions of Lemma 4 in \cite{DL99}.
\end{Remark}

\section{Strong mixing and weak dependence coefficients under renewal sampling}
\label{sec2}
Let $X$ be a strictly stationary $\R^d$-valued process, i.e. for all $n \in \N$ and all $t_1,\ldots,t_n \in \mathcal{I}$ it holds that the finite dimensional distributions (indicated by $\mathcal{L}(\cdot)$) are shift invariant
\[
\mathcal{L}(X_{t_1+h},\ldots,X_{t_n+h})=\mathcal{L}(X_{t_1},\ldots,X_{t_n})\,\,\,\forall h \in \mathcal{I}.
\]
We want to investigate the asymptotic dependence of $X$ sampled at a renewal sequence. 

We use in the paper a definition of renewal process based on the sequence $\tau$, see \cite{H74}, and that agrees with the definition of a two-sided L\'evy process (cf. pg. 124, \cite{A}) in the Poisson case. Similar sampling schemes are used, e.g., by \cite{LM92}, \cite{CR07}, and \cite{ASM04}.

\begin{Definition}
	\label{inter_arrival}
	Let $\mathcal{I} \subseteq \R^m$ be a set as in Definition \ref{index} and $\tau=(\tau_i)_{i \in \Z\setminus \{0\}}$ be an $\mathcal{I}$-valued sequence of non-negative (component-wise) i.i.d. random vectors with distribution function $\mu$ such that $\mu\{0\} <1$.
	For $i \in \Z$, we define an $\mathcal{I}$-valued stochastic process $(T_i)_{i \in \Z}$ as 
	\begin{align}
		\label{eq1}
		T_0:=0 \quad \text{and} \quad T_i := \begin{cases}
			\,\,\,\,\,\sum_{j=1}^i \tau_j \, , \quad &\,\,\,\,\,i \in \N , \\
			-\sum_{j=i}^{-1} \tau_j \, , \quad &-i \in \N.
		\end{cases}
	\end{align}
	The sequence $(T_i)_{i \in \Z}$ is called a renewal sampling sequence. When $\mathcal{I} \subset \R$, we call $\tau$ the sequence of the inter-arrival times.
\end{Definition}

\begin{Definition}
	\label{ass2}
	Let $X=(X_t)_{t \in \mathcal{I}}$ be a process with values in $\R^d$ and let $(T_i)_{i \in \Z}$ be a renewal sampling sequence independent of $X$. We define the sequence $Y=(Y_i)_{i\in \Z}$ as the stochastic process with values in $\R^{d+1}$ given by 
	\begin{equation}
		\label{eq2}
		Y_i=\left (  X_{T_i}, T_i-T_{i-1} \right)^{\top}.
	\end{equation}
	We call $X$ the underlying process and $Y$ the renewal sampled process.
\end{Definition}

\begin{Remark}[Independence of $T$ and $X$]
	\label{time}
	The assumption of independence between the stochastic process $T$, modeling a random sampling scheme, and $X$ is reasonable when working with time series whose records are not event-triggered. For example, transaction level data, see \cite{H12} for a survey, are records of trade or transactions occurring when a buyer and a seller agree on a price for a security (triggering event). Even if these data should not be modeled by assuming that $T$ and $X$ are independent, this assumption is broadly used in the literature analyzing financial data; see, for instance, \cite{ASM04}, \cite{AM08}, and \cite{HY05}. Time series that are not event-triggered can, for example, be determined starting from the below data sets.
	
	\begin{itemize}
		\item Modern health monitoring systems like smartphones or wearable devices as smartwatches enable monitoring the health conditions of patients by measuring heart rate, electrocardiogram, body temperature, among other information, see \cite{Medical}. These measurements are records on a discrete-time grid, mostly irregularly distributed. In these cases, observation times depend on the measuring instrument (typically sensors), i.e., on a random source independent of the process $X$, as observed by \cite{BB10}. In this context, the hypothesis of independence of $T$ and $X$ is entirely realistic. 
		
		\item Measurements from continuous spatio-temporal random fields such as temperature, vegetation, or population are nowadays recorded over a set of moving or fixed locations in space and time, typically not regularly distributed. These data sets are called \textit{point reference or raster data} and are analyzed, for example, in earth science. Also, GPS data, e.g., of a taxi, which periodically transmit the location of an object over time, are an example of spatio-temporal data sets called \textit{trajectory data} which are typically irregularly distributed in space and time. We refer the reader to the survey of \cite{WCY19} and the references therein for an account of the data sets above and their practical relevance. The hypothesis of independence of $T$ and $X$ seems realistic for these data because their sampling in space-time depends on the instrument used to record them.
	\end{itemize}
	
\end{Remark}
\noindent
In the following theorem, we work with the class of functions defined in (\ref{set}) and 
\begin{equation}
	\label{set2}
	\mathcal{\tilde{F}}=\bigcup_{u \in \N} \mathcal{\tilde{F}}_u \,\,\, \quad \textrm{and} \quad \,\,\, \mathcal{\tilde{G}}=\bigcup_{v \in \N} \mathcal{\tilde{G}}_v,
\end{equation} 
where $\mathcal{\tilde{F}}_u$ and $\mathcal{\tilde{G}}_v$ are respectively two classes of measurable functions from $(\R^{d+1})^u$ to $\R$ and $(\R^{d+1})^v$ to $\R$ which can be either bounded or bounded Lipschitz.

\begin{Theorem}
	\label{main}
	Let $Y=(Y_i)_{i \in \Z}$ be a renewal sampled process with the underlying process $X$ being strictly stationary and $\Psi$-weakly dependent with coefficients $\epsilon$. Then, $Y$ is a strictly stationary process, and there exists a sequence $\mathcal{E}$ such that
	
	\begin{equation*}
		\label{bound}
		|Cov(\tilde{F}(Y_{i_1},\ldots,Y_{i_u}),\tilde{G}(Y_{j_1},\ldots,Y_{j_v}))| \leq C  \, \Psi(\|\tilde{F}\|_{\infty},\|\tilde{G}\|_{\infty},Lip(\tilde{F}),Lip(\tilde{G}),u,v)   \, \mathcal{E}(n)
	\end{equation*}
	
	for all
	\begin{equation*}
		\left\{
		\begin{array}{l}
			(u,v) \in \N^* \times \N^*;\\
			n\in \N; \\
			\{i_1,\ldots,i_u\} \subseteq \Z \,\, \textrm{and}\,\, \{j_1,\ldots,j_v\} \subseteq \Z, \\ \textrm{with}\,\, i_1\leq \ldots \leq i_u < i_u+n\leq j_1\leq \ldots\leq j_v; \\
			\textrm{functions} \,\, \tilde{F} \colon (\R^{d+1})^u \to \R \,\, \textrm{and}\,\, \tilde{G}\colon (\R^{d+1})^v \to \R \,\,\textrm{belonging to $\mathcal{\tilde{F}}$ and $\mathcal{\tilde{G}}$},
		\end{array}
		\right.
	\end{equation*}
	where $C$ is a constant independent of $n$.
	Moreover, 
	\begin{equation}
		\label{coef}
		\mathcal{E}(n)=\int_{\mathcal{I}} \epsilon(\|r\|) \, \mu^{*n}(dr),
	\end{equation}
	\noindent	
	where $\mu^{*0}$ is the Dirac delta measure in zero, and, $\mu^{*n}$ is  the n-fold convolution of $\mu$ for $n \geq 1$.
\end{Theorem}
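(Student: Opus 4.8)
The plan is to condition on the renewal sampling sequence $T$ — equivalently on the i.i.d.\ increment sequence $\tau$ — and to exploit the independence $X\perp T$ together with the strict stationarity of $X$, so that, conditionally on $T$, the $\Psi$-weak dependence inequality for $X$ can be applied at the (now deterministic) sampling points $T_{i_1},\dots,T_{i_u},T_{j_1},\dots,T_{j_v}$. The $\alpha$-mixing case needs no separate treatment, being subsumed by Proposition \ref{mixing2}.

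First I would record two elementary facts. Since the increments are non-negative, $(T_i)_{i\in\Z}$ is non-decreasing for the coordinatewise order on $\mathcal{I}\subseteq\R^m$; hence, for indices $i_1\le\dots\le i_u<i_u+n\le j_1\le\dots\le j_v$, every difference $T_{j_\ell}-T_{i_k}$ dominates $T_{j_1}-T_{i_u}$ coordinatewise, so $d(\{T_{i_k}\}_k,\{T_{j_\ell}\}_\ell)=\|T_{j_1}-T_{i_u}\|$, and $T_{j_1}-T_{i_u}$ is a sum of exactly $j_1-i_u\ge n$ i.i.d.\ $\mu$-distributed increments, i.e.\ has law $\mu^{*(j_1-i_u)}$. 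Secondly, for a fixed realization $t$ of $T$ and $\tilde{F}\in\tilde{\mathcal{F}}_u$, the function $F^t(x_1,\dots,x_u):=\tilde{F}\big((x_1,t_{i_1}-t_{i_1-1}),\dots,(x_u,t_{i_u}-t_{i_u-1})\big)$ belongs to $\mathcal{F}_u$ with $\|F^t\|_\infty\le\|\tilde{F}\|_\infty$ and $Lip(F^t)\le Lip(\tilde{F})$ (fixing the last coordinates does not increase the sup-norm, and appending equal last coordinates leaves the distance $\delta$ unchanged), and analogously $G^t\in\mathcal{G}_v$. Strict stationarity of $Y$ then follows by conditioning on $T$: the conditional finite-dimensional laws of $(X_{T_i})_i$ depend, by stationarity of $X$, only on the spacings $T_{i_k}-T_{i_1}$, and these, together with the increments $T_i-T_{i-1}$, form a shift-covariant family of functionals of the i.i.d.\ sequence $(T_i-T_{i-1})_{i\in\Z}$, which is itself a reindexing of $\tau$.

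For the covariance bound, set $A=\tilde{F}(Y_{i_1},\dots,Y_{i_u})$, $B=\tilde{G}(Y_{j_1},\dots,Y_{j_v})$ and $\mathcal{T}=\sigma(T)$; conditionally on $T$ one has $A=F^T(X_{T_{i_1}},\dots,X_{T_{i_u}})$ and $B=G^T(X_{T_{j_1}},\dots,X_{T_{j_v}})$. The crucial step is the identity $Cov(A,B)=\E\big[Cov(A,B\mid\mathcal{T})\big]$, i.e.\ that $Cov\big(\E[A\mid\mathcal{T}],\E[B\mid\mathcal{T}]\big)=0$. Here $i_u<j_1$ is decisive: because $X\perp T$ and $X$ is stationary, $\E[A\mid\mathcal{T}]$ depends on $T$ only through the differences $T_{i_k}-T_{i_1}$ and $T_{i_k}-T_{i_k-1}$, hence is measurable with respect to the $\sigma$-field generated by the increments of index at most $i_u$, whereas $\E[B\mid\mathcal{T}]$ is measurable with respect to the one generated by the increments of index exceeding $i_u$; since $\tau$ is i.i.d.\ these two $\sigma$-fields are independent, which kills the outer covariance. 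Then, for each fixed $t$, applying $\Psi$-weak dependence of $X$ (Definition \ref{gen}) to $F^t,G^t$ with $r=\|t_{j_1}-t_{i_u}\|$ gives
\[
\big|Cov(A,B\mid T=t)\big|\le c\,\Psi\big(\|F^t\|_\infty,\|G^t\|_\infty,Lip(F^t),Lip(G^t),u,v\big)\,\epsilon\big(\|t_{j_1}-t_{i_u}\|\big).
\]
Taking expectation over $T$, and using the monotonicity of $\Psi$ together with the norm bounds on $F^t,G^t$ to extract the deterministic factor $\Psi(\|\tilde{F}\|_\infty,\|\tilde{G}\|_\infty,Lip(\tilde{F}),Lip(\tilde{G}),u,v)$, leaves $\E\big[\epsilon(\|T_{j_1}-T_{i_u}\|)\big]=\int_{\mathcal{I}}\epsilon(\|r\|)\,\mu^{*(j_1-i_u)}(dr)$.

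Finally I would show that $m\mapsto\int_{\mathcal{I}}\epsilon(\|r\|)\,\mu^{*m}(dr)$ is non-increasing: writing $\mu^{*m}=\mu^{*n}*\mu^{*(m-n)}$ and using that $\mu$ is carried by the non-negative orthant (so adding increments only enlarges $\|\cdot\|$) together with $\epsilon$ non-increasing, one gets $\int\epsilon(\|r\|)\,\mu^{*m}(dr)\le\int\epsilon(\|r\|)\,\mu^{*n}(dr)=\mathcal{E}(n)$ for $m\ge n$; since $j_1-i_u\ge n$ this yields the asserted inequality with $C=c$. The step I expect to be the main obstacle is the vanishing of $Cov\big(\E[A\mid\mathcal{T}],\E[B\mid\mathcal{T}]\big)$ — specifically, pinning down exactly which increments each of the two conditional expectations depends on, given the two-sided indexing of $\tau$ that omits the index $0$ and the resulting need to handle negative indices and the boundary case $j_1\le 0$ — while the remaining steps are bookkeeping and the elementary convolution monotonicity.
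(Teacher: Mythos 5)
Your proposal is correct and follows essentially the same route as the paper's proof: conditioning on $\tau$, applying the law of total covariance, killing the cross term via the independence of $\sigma(\tau_i: i\le i_u)$ and $\sigma(\tau_i: i>i_u)$ after invoking stationarity of $X$, and then bounding the conditional covariance by $\Psi$-weak dependence at distance $\|T_{j_1}-T_{i_u}\|$ with law $\mu^{*(j_1-i_u)}$, concluding by monotonicity of $\epsilon$ and $j_1-i_u\ge n$. The only cosmetic differences are that the paper imports strict stationarity of $Y$ from Proposition 2.1 of \cite{BC18} and disposes of the negative-index issue by assuming w.l.o.g.\ $i_1>0$ (justified precisely by that stationarity), whereas you sketch both points directly.
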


\begin{proof}[Proof of Theorem \ref{main}]
	$Y$ is a strictly stationary process by Proposition 2.1 in \cite{BC18}.
	Consider arbitrary fixed $(u,v) \in \mathbb{N}^* \times \mathbb{N}^*$, $n \in \N$, $\{i_1,\ldots,i_u\} \subseteq \Z$ and $\{j_1,\ldots,j_v\} \subseteq \Z$ with $i_1\leq\ldots\leq i_u \leq i_u+n \leq j_1\leq \ldots\leq j_v$, and functions $\tilde{F} \in \tilde{\mathcal{F}}$ and $\tilde{G} \in \tilde{\mathcal{G}}$. W.l.o.g. let us consider throughout that $i_1 >0$. Then, by
	conditioning with respect to the sequence of the inter-arrival times $\tau$ and using the law of total covariance (cf. Proposition A.1 in \cite{C19}), we obtain that
	
	\begin{align}
		&|Cov(\tilde{F}(Y_{i_1},\ldots,Y_{i_u}),\tilde{G}(Y_{j_1},\ldots,Y_{j_v})) | \nonumber \\
		&\leq | \E(Cov(\tilde{F}(Y_{i_1},\ldots,Y_{i_u}), \tilde{G}(Y_{j_1},\ldots,Y_{j_v})| \tau_i : i=1,\ldots,j_v ))| \label{p1} \\
		&+ | Cov(\E(\tilde{F}(Y_{i_1},\ldots,Y_{i_u})| \tau_i: i=1,\ldots,j_v), \E(\tilde{G}(Y_{j_1},\ldots,Y_{j_v})|\tau_i : i=1,\ldots,j_v ) ) |. \label{p2}
	\end{align}
	
	Let us first discuss the summand (\ref{p2}). The term
	\[
	\E(\tilde{F}(Y_{i_1},\ldots,Y_{i_u})| \tau_i:i=1,\ldots,j_v)= \E(\tilde{F}(Y_{i_1},\ldots,Y_{i_u})| \tau_i:i=1,\ldots,i_u)
	\]
	because $\tilde{F}(Y_{i_1},\ldots,Y_{i_u})$	is independent of $\{\tau_i :i=i_{u}+1,\ldots,j_v\}$.
	On the other hand,
	\begin{align*}
		&\E(\tilde{G}(Y_{j_1},\ldots,Y_{j_v})|\tau_i : i=1,\ldots,j_v )\\
		&=\E(\tilde{G}((X_{T_{i_u}+\sum_{i=i_u+1}^{j_1} \tau_i}, \tau_{j_1})^{\prime},\ldots,(X_{T_{i_u}+\sum_{i=i_u+1}^{j_v} \tau_i}, \tau_{j_v})^{\prime})|\tau_i : i=1,\ldots,j_v ),
	\end{align*}
	and, by stationarity of the process $X$ and the i.i.d property of  $(\tau_i)_{i \in \Z \setminus \{0\}}$, it is equal to 
	\begin{align*}
		&\E(\tilde{G}((X_{\sum_{i=i_u+1}^{j_1} \tau_i}, \tau_{j_1})^{\prime},\ldots,(X_{\sum_{i=i_u+1}^{j_v} \tau_i}, \tau_{j_v})^{\prime})|\tau_i : i=1,\ldots,j_v ) \\
		&= \E(\tilde{G}((X_{\sum_{i=i_u+1}^{j_1} \tau_i}, \tau_{j_1})^{\prime},\ldots,(X_{\sum_{i=i_u+1}^{j_v} \tau_i}, \tau_{j_v})^{\prime})|\tau_i : i=i_u+1,\ldots,j_v)
	\end{align*}
	because of the independence between $\{(X_{\sum_{i=i_u+1}^{j_1} \tau_i}, \tau_{j_1})^{\prime},\ldots,(X_{\sum_{i=i_u+1}^{j_v} \tau_i}, \tau_{j_v})^{\prime}\}$ and \\
	$\{ \tau_i : i=1,\ldots,i_u \}$.
	Thus, the summand (\ref{p2}) is equal to zero because
	\[
	\E(\tilde{F}(Y_{i_1},\ldots,Y_{i_u})| \tau_i:i=1,\ldots,i_u),\]
	and
	\[
	\E(\tilde{G}((X_{\sum_{i=i_u+1}^{j_1} \tau_i}, \tau_{j_1})^{\prime},\ldots,(X_{\sum_{i=i_u+1}^{j_v} \tau_i}, \tau_{j_v})^{\prime})|\tau_i : i=i_u+1,\ldots,j_v) ) 
	\]
	are independent.
	
	The summand (\ref{p1}) is less than or equal to
	
	\begin{align*}
		\int_{\mathcal{I}^{j_v}} \Big | Cov(\tilde{F}((X_{\sum_{i=1}^{i_1} s_i}, s_{i_1})^{\prime},\ldots,&(X_{\sum_{i=1}^{i_u} s_i}, s_{i_u})^{\prime}), \tilde{G}((X_{\sum_{i=1}^{j_1} s_i}, s_{j_1})^{\prime},\ldots\\
		&\ldots,(X_{\sum_{i=1}^{j_v} s_i}, s_{j_v})^{\prime}) )  \Big | \,   d\mathbb{P}_{\{\tau_i:i=1,\ldots,j_v\}}(s_1,\ldots,s_{j_v}),
	\end{align*}
	where $\mathbb{P}_{\{\tau \}}$ indicates the joint distribution of the inter-arrival times sequence $\tau$. For a given $(s_{i_1},\ldots,s_{j_v}) \in \mathcal{I}^{j_v}$, we have that $\tilde{F}( (\cdot, s_{i_1}),\ldots,(\cdot,s_{i_u})) \in \mathcal{F}$ and $\tilde{G}((\cdot, s_{j_1}),\ldots,(\cdot, s_{j_v})) \in \mathcal{G}$. $X$ is a $\Psi$-weakly dependent process, then the above inequality is less than or equal to
	\[
	\int_{\mathcal{I}^{j_1-i_u}} C \, \Psi(\|\tilde{F}((\cdot, s_{i_1}),\ldots,(\cdot,s_{i_u}))\|_{\infty},\|\tilde{G}((\cdot, s_{j_1}),\ldots,(\cdot, s_{j_v}))\|_{\infty},Lip(\tilde{F}((\cdot, s_{i_1}),\ldots,(\cdot,s_{i_u}))),
	\]
	\[
	Lip(\tilde{G}((\cdot, s_{j_1}),\ldots,(\cdot, s_{j_v}))),u,v) \, \epsilon \Big(\Big \|\sum_{i=i_u+1}^{j_1} s_i \Big \|\Big)   d\mathbb{P}_{\{\tau_i:i=1,\ldots,j_v\}}(s_1,\ldots,s_{j_v}),
	\]
	and, because the sequence $\{\tau_i: i=i_u+1,\ldots,j_1\}$ is independent of the sequence $\{\tau_i: i=1,\ldots,i_u,j_1 +1,\ldots,j_v\}$ , the integral above is less than or equal to
	\[
	\int_{\mathcal{I}^{j_1-i_u}} C \, \Psi(\|\tilde{F}\|_{\infty},\|\tilde{G}\|_{\infty},Lip(\tilde{F}),Lip(\tilde{G}),u,v) \, \epsilon \Big(\Big\|\sum_{i=i_u+1}^{j_1} s_i \Big \|\Big) d\mathbb{P}_{\{\tau_i:i=i_u+1,\ldots,j_1\}}(s_{i_u+1},\ldots,s_{j_1}).
	\]
	We have that $j_1-i_u \geq n$ and that w.l.o.g. the coefficients $\epsilon$ are non increasing. Thus, we can conclude that the integral above is less than or equal to  
	\[
	C \, \Psi(\|\tilde{F}\|_{\infty},\|\tilde{G}\|_{\infty},Lip(\tilde{F}),Lip(\tilde{G}),u,v) \int_{\mathcal{I}} \epsilon(\|r\|) \mu^{*n}(dr).
	\]

\end{proof}

\begin{Corollary}
	\label{conclusion}
	Let the assumptions of Theorem \ref{main} hold. If the coefficients (\ref{coef}) are finite, and converge to zero as $n$ goes to infinity, then $Y$ is $\Psi$-weakly dependent with coefficients $\mathcal{E}$. 
\end{Corollary}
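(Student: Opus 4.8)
The plan is to obtain the Corollary almost immediately from Theorem \ref{main}, since that theorem already delivers, for the renewal sampled process $Y$, a covariance bound of precisely the shape demanded by Definition \ref{gen}: for ordered index blocks separated by an integer gap $n$,
\[
|Cov(\tilde F(Y_{i_1},\ldots,Y_{i_u}),\tilde G(Y_{j_1},\ldots,Y_{j_v}))|\le C\,\Psi(\|\tilde F\|_\infty,\|\tilde G\|_\infty,Lip(\tilde F),Lip(\tilde G),u,v)\,\mathcal{E}(n),
\]
with $C$ independent of $n$ and with $\tilde F,\tilde G$ ranging over exactly the function classes (bounded, bounded Lipschitz, or the mixed combination) that define $\Psi$-weak dependence on $\R^{d+1}$. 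So the proof reduces to checking that $\mathcal{E}$ qualifies as a sequence of coefficients in the sense of Definition \ref{gen}, and then to matching the index conventions of the two statements.

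First I would check that $\mathcal{E}$ may be taken non-increasing, in accordance with the convention stated after Definition \ref{gen}. Writing $S_n=\tau_1+\cdots+\tau_n$, so that $\mathcal{E}(n)=\E[\epsilon(\|S_n\|)]$ with $S_n$ distributed according to $\mu^{*n}$, we have $S_{n+1}=S_n+\tau_{n+1}\ge S_n$ componentwise because the $\tau_i$ are componentwise non-negative; hence $\|S_{n+1}\|\ge\|S_n\|$ by monotonicity of the Euclidean norm on the positive orthant, and then $\epsilon(\|S_{n+1}\|)\le\epsilon(\|S_n\|)$ pointwise since $\epsilon$ is non-increasing. Taking expectations gives $\mathcal{E}(n+1)\le\mathcal{E}(n)$ for all $n\ge 0$. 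By hypothesis each $\mathcal{E}(n)$ is finite and $\mathcal{E}(n)\to 0$ as $n\to\infty$, so extending $\mathcal{E}$ to $\R_+$ via $\mathcal{E}(r):=\mathcal{E}(\lceil r\rceil)$ produces a finite, non-increasing function converging to zero, exactly what Definition \ref{gen} requires of a coefficient sequence.

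Finally I would reconcile the indexing. The process $Y$ is indexed by $\Z$, so given $r\in\R_+$ and finite index sets as in Definition \ref{gen} with mutual distance at least $r$, after relabelling we may assume the two blocks are in order, that is, in the situation of Theorem \ref{main} with integer gap $n=j_1-i_u\ge\lceil r\rceil$ (using that the indices are integers). Inserting this $n$ into the bound of Theorem \ref{main} and using that $\mathcal{E}$ is non-increasing yields
\[
|Cov(\tilde F(Y_{i_1},\ldots,Y_{i_u}),\tilde G(Y_{j_1},\ldots,Y_{j_v}))|\le C\,\Psi(\|\tilde F\|_\infty,\|\tilde G\|_\infty,Lip(\tilde F),Lip(\tilde G),u,v)\,\mathcal{E}(\lceil r\rceil),
\]
which is inequality (\ref{def}) for $Y$ with constant $c=C$ and coefficients $\mathcal{E}$; together with the strict stationarity of $Y$ recorded in Theorem \ref{main}, this says precisely that $Y$ is $\Psi$-weakly dependent with coefficients $\mathcal{E}$. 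I expect no real obstacle here: all the analytic content sits in Theorem \ref{main}, and what remains is only the elementary monotonicity observation for $\mathcal{E}$ together with the essentially notational reconciliation of the ``finite index sets at distance $\ge r$'' formulation of Definition \ref{gen} with the ``ordered blocks separated by an integer gap $n$'' formulation of Theorem \ref{main}.
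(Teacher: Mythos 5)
Your proposal is correct and follows the same route as the paper, whose entire proof is the one-line remark that the corollary follows directly from Definition \ref{gen}; you merely spell out the details (the monotonicity of $\mathcal{E}$ and the reconciliation of the gap-$n$ formulation of Theorem \ref{main} with the distance-$r$ formulation of the definition) that the paper leaves implicit. No substantive difference.
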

\begin{proof}[Proof of Corollary \ref{conclusion}]
It directly follows by Definition \ref{gen}.
\end{proof}

\begin{Remark}
	\label{charlot}
	\cite{CR07} obtain for a renewal process $T$ independent of $X$, $\alpha$-coefficients related to the process $(X_{T_i})_{i \in \Z}$ equal to $\E[\alpha(T_n)]$ which corresponds to (\ref{coef}) for all $n \in \N$. Their results also extend to renewal processes $T$ having inter-arrival time sequence $\tau$ which is itself $\alpha^{\prime}$-mixing with coefficients equal to $\E[\alpha(T_n)]+\alpha^{\prime}$. However, the techniques involved in their proof exploit the definition of $\alpha$-mixing coefficients as given in (\ref{sup}) and are not directly applicable in the case of weakly dependent processes. Moreover, the authors do not discuss how to obtain the inheritance of strong mixing, i.e., that the obtained $\alpha$-coefficients needs to be finite and converge to zero as $n$ goes to infinity. 
	
	We further explore this issue by discussing in Section \ref{expdecay} and \ref{powerdecay} several examples of sampling schemes for which the assumptions of Corollary \ref{conclusion} are satisfied.
\end{Remark}
\noindent
At last, concerning $\theta$-lex weak dependence defined in Remark \ref{lex}, we obtain the following result.

\begin{Corollary}
	\label{lex_cor}
	Let $X$ be a strictly stationary and $\theta$-lex weakly dependent random field defined on $\R^m$, and  $T$ be a renewal sampling sequence independent of $X$ with values in $\R^m$.
	Then $Y$ is a strictly stationary process, and there exists a sequence $\mathcal{E}$ such that
	\begin{equation}
		\label{theta_cov}
		|Cov(\tilde{F}(Y_{i_1},\ldots,Y_{i_u}),\tilde{G}(Y_{j}))| \leq C  \, \|F\|_{\infty} Lip(G) \, \mathcal{E}(n)
	\end{equation}
	
	for all
	\begin{equation*}
		\left\{
		\begin{array}{l}
			(u,v) \in \N^* \times \N^*;\\
			n\in \N; \\
			\{i_1,\ldots,i_u\} \subseteq \Z \,\, \textrm{and}\,\, j \in \Z, \\ \textrm{with}\,\, i_1\leq \ldots \leq i_u < i_u+n\leq j; \\
			\textrm{functions} \,\, \tilde{F} \colon (\R^{d+1})^u \to \R \,\, \textrm{and}\,\, \tilde{G}\colon \R^{d+1} \to \R \,\,\textrm{belonging to $\mathcal{\tilde{F}}$ and $\mathcal{\tilde{G}}$},
		\end{array}
		\right.
	\end{equation*}
	where $C$ is a constant independent of $n$, and $\mathcal{E}$ are defined in (\ref{coef}).
\end{Corollary}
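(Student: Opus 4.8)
The plan is to reproduce the proof of Theorem \ref{main} almost line by line, the only genuinely new ingredient being the lexicographic comparability that the $\theta$-lex covariance inequality of Remark \ref{lex} demands of the sampled locations. Strict stationarity of $Y$ is again Proposition 2.1 in \cite{BC18}. Fix $(u,v)\in\N^*\times\N^*$ (the value of $v$ is irrelevant, since the second block reduces to a single index), $n\in\N$, integers $i_1\le\ldots\le i_u<i_u+n\le j$, and functions $\tilde F\in\tilde{\mathcal F}$, $\tilde G\in\tilde{\mathcal G}$ with $\tilde G\colon\R^{d+1}\to\R$ bounded Lipschitz; without loss of generality $i_1>0$. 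Conditioning on $\tau=(\tau_i)_{i\in\Z\setminus\{0\}}$ and using the law of total covariance splits $Cov(\tilde F(Y_{i_1},\ldots,Y_{i_u}),\tilde G(Y_j))$ into the analogue of the term \eqref{p1}, i.e.\ $\E\big[Cov(\tilde F(Y_{i_1},\ldots,Y_{i_u}),\tilde G(Y_j)\mid\tau_i:i=1,\ldots,j)\big]$, and the analogue of \eqref{p2}, the covariance of the two conditional expectations. The latter vanishes by the identical argument used in Theorem \ref{main}: $\E[\tilde F(Y_{i_1},\ldots,Y_{i_u})\mid\tau_i:i=1,\ldots,j]$ is a function of $\tau_1,\ldots,\tau_{i_u}$ only, while strict stationarity of $X$ together with the i.i.d.\ property of $\tau$ lets one represent $\E[\tilde G(Y_j)\mid\tau_i:i=1,\ldots,j]$ as a function of $\tau_{i_u+1},\ldots,\tau_j$ only, and these two blocks of inter-arrival times are independent.

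For the first term I would argue pointwise in the realization $(s_1,\ldots,s_j)$ of $\tau$, writing it as $\int\big|Cov\big(\tilde F\big((X_{\sum_{i\le i_1}s_i},s_{i_1}),\ldots,(X_{\sum_{i\le i_u}s_i},s_{i_u})\big),\tilde G\big((X_{\sum_{i\le j}s_i},s_j)\big)\big)\big|\,d\mathbb P_{\{\tau_i:i=1,\ldots,j\}}$. Because $\tau$ is component-wise non-negative, the sampled locations satisfy $\sum_{i\le i_1}s_i\le_{lex}\cdots\le_{lex}\sum_{i\le i_u}s_i\le_{lex}\sum_{i\le j}s_i$, and since adding non-negative vectors cannot decrease the Euclidean norm, $d\big(\{\sum_{i\le i_1}s_i,\ldots,\sum_{i\le i_u}s_i\},\sum_{i\le j}s_i\big)=\big\|\sum_{i=i_u+1}^{j}s_i\big\|$. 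The sliced functions $\tilde F((\cdot,s_{i_1}),\ldots,(\cdot,s_{i_u}))$ and $\tilde G((\cdot,s_j))$ belong to the classes $\mathcal F$ and $\mathcal G$ of Remark \ref{lex}, so (on the set where the inequality applies, see below) the $\theta$-lex covariance inequality for $X$ bounds the integrand by $c\,\|\tilde F((\cdot,s_{i_1}),\ldots,(\cdot,s_{i_u}))\|_\infty\,Lip\big(\tilde G((\cdot,s_j))\big)\,\epsilon\big(\|\sum_{i=i_u+1}^{j}s_i\|\big)$.

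Replacing the sliced sup-norm and Lipschitz constant by $\|\tilde F\|_\infty$ and $Lip(\tilde G)$ (which only enlarges the bound), noting that $\epsilon(\|\sum_{i=i_u+1}^{j}s_i\|)$ depends only on $s_{i_u+1},\ldots,s_j$, and using that $\{\tau_{i_u+1},\ldots,\tau_j\}$ is independent of the remaining inter-arrival times, the integral collapses to $c\,\|\tilde F\|_\infty Lip(\tilde G)\int_{\mathcal I^{j-i_u}}\epsilon(\|\sum_{i=i_u+1}^{j}s_i\|)\,d\mathbb P_{\{\tau_i:i=i_u+1,\ldots,j\}}$. Finally, $j-i_u\ge n$, $\sum_{i=i_u+1}^{j}\tau_i=\sum_{i=i_u+1}^{i_u+n}\tau_i+(\text{non-negative})$, and $\epsilon$ is non-increasing, so this is at most $c\,\|\tilde F\|_\infty Lip(\tilde G)\int_{\mathcal I}\epsilon(\|r\|)\,\mu^{*n}(dr)=c\,\|\tilde F\|_\infty Lip(\tilde G)\,\mathcal E(n)$, which is exactly \eqref{theta_cov}. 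The remaining bookkeeping (shifting the argument from $i_1>0$ to general $i_1$, and the factorizations of the conditional expectations) is a routine transcription of the proof of Theorem \ref{main}.

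The step I expect to be the main obstacle is the very invocation of the $\theta$-lex inequality: component-wise non-negativity of $\tau$ delivers only $\le_{lex}$, whereas Remark \ref{lex} requires the strict order $<_{lex}$ between the $I_u$-locations and $j$. Strictness fails precisely on the event $\{\sum_{i=i_u+1}^{j}\tau_i=0\}$ (equivalently $\tau_{i_u+1}=\cdots=\tau_j=0$), where $T_{i_u}=T_j$ and some arguments of $\tilde F$ sit at the exact location $T_j$ entering $\tilde G(Y_j)$, so that no weak-dependence bound is available there. When $\mu$ is non-atomic at the origin this event is $\mu^{*(j-i_u)}$-null and the argument above applies verbatim. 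In the general case $\mu\{0\}<1$ I would isolate this ``collision'' set at the start of the estimate of the \eqref{p1}-analogue: on it the lexicographic distance vanishes, so the weight it carries inside $\mathcal E(n)$ is exactly $\epsilon(0)\,\mu^{*(j-i_u)}(\{0\})$, and it suffices to bound $|Cov(\tilde F(Y_{i_1},\ldots,Y_{i_u}),\tilde G(Y_j))|$ there by a constant multiple of $\|\tilde F\|_\infty Lip(\tilde G)\,\epsilon(0)$; collapsing the coinciding coordinates of $\tilde F$ and conditioning on $X_{T_j}$ reduces this to controlling the covariance of a bounded function of $X_{T_j}$ against a Lipschitz function of $X_{T_j}$, which is where a finite-variance (or comparable moment) assumption on $X$ — or, alternatively, a version of $\theta$-lex weak dependence allowing $\le_{lex}$ — would have to be used.
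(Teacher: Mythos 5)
Your proposal follows essentially the same route as the paper: the published proof of Corollary \ref{lex_cor} consists precisely of observing that non-negativity of $\tau$ yields $T_{i_1}\leq_{lex}\cdots\leq_{lex}T_{i_u}\leq_{lex}T_j$ and then invoking ``the line of proof of Theorem \ref{main}'', which is exactly what you transcribe (stationarity via Proposition 2.1 of \cite{BC18}, the total-covariance split, the vanishing of the second term, and the slicing argument with $d(I_u,j)=\|\sum_{i=i_u+1}^{j}s_i\|$). The one point where you go beyond the paper --- the collision event $\{\tau_{i_u+1}=\cdots=\tau_j=0\}$, on which only $\leq_{lex}$ rather than the strict order $<_{lex}$ demanded by Remark \ref{lex} is available --- is a genuine subtlety that the paper itself passes over silently (its own proof writes the non-strict order), so isolating that set and noting that it is void when $\mu\{0\}=0$, and otherwise requires an extra moment-type argument, is a refinement of, not a deviation from, the published argument.
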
  

\begin{proof}[Proof of Corollary \ref{lex_cor}]
The sequence $\tau$ is a sequence of non-negative i.i.d random vectors, then $T_{i_1} \leq_{lex} \ldots \leq_{lex} T_{i_u} \leq_{lex} T_j$. Hence, stationarity of $Y$ follows from Proposition 2.1 in \cite{BC18}, and the covariance inequality (\ref{theta_cov}) holds by following the line of proof in Theorem \ref{main}.

\end{proof}
Note that, if the inequality (\ref{theta_cov}) holds and the coefficients (\ref{coef}) are finite and converge to zero as $n$ goes to infinity, then $Y$ is a $\theta$-weakly dependent process as defined in \cite{DD03}.

\section{Explicit bounds for $\Psi$-coefficients}
\label{sec3}

In this section, we consider renewal sampling of $X=(X_t)_{t \in \R}$. Therefore, the inter-arrival times are a sequence of non-negative i.i.d random variables with values in $\R$.  

We first show that if $X$ is $\Psi$-weakly dependent and admits exponential or power decaying coefficients $\epsilon$, then $Y$ is, in turn, $\Psi$-weakly dependent and its coefficients $\mathcal{E}$ preserve (at least asymptotically) the decay behavior of $\epsilon$.
This result directly enables the application of the limit theory for a vast class of $\Psi$-weakly dependent processes $Y$, of which we present several examples throughout the section.

In fact, central limit theorems for a $\Psi$-weakly dependent process $X$ typically hold under sufficient conditions of the following type:
$\E[\|X_0\|^{\delta}]< \infty$ for some $\delta >0$ and the coefficients $\epsilon$ satisfy a condition of the form
\begin{equation}
	\label{suff}
	\sum_{i=1}^\infty \epsilon(n)^{A(\delta)}  < \infty,
\end{equation}
where $A(\delta)$ is a certain function of $\delta$. 
If $X$ admits coefficients $\epsilon$ with exponential or sufficiently fast power decay, then conditions of type (\ref{suff}) are satisfied. If, in turn, $Y$ is $\Psi$-weakly dependent with coefficients having exponential or sufficiently fast power decay, then conditions of type (\ref{suff}) are also satisfied under renewal sampling.

\subsection{Exponential decay}
\label{expdecay}

In terms of the Laplace transform of the inter-arrival times, we can obtain a general bound for the coefficients $(\mathcal{E}(n))_{n \in \N}$.

\begin{Proposition}
	\label{Laplace}
	Let $X=(X_t)_{t \in \R}$, $Y=(Y_i)_{i \in \Z}$ and $(T_i)_{i \in \Z}$ be as in Theorem \ref{main}. Let us assume that $\epsilon(r)\leq C\mathrm{e}^{-\gamma r}$ for $\gamma>0$ and denote the Laplace transform of the distribution function $\mu$ by 
	\[
	L_{\mu}(t)=\int_{\R^{+}} \mathrm{e}^{-tr} \, \mu(dr), \,\, t \in \R_{+}.
	\]
	Then, the process $Y$ admits coefficients
	\[\mathcal{E}(n)\leq C\Big(\frac{1}{L_{\mu}(\gamma)}\Big)^{-n}\] 
	which converge to zero as $n$ goes to infinity.
\end{Proposition}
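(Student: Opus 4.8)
The plan is to bound the convolution integral in (\ref{coef}) directly using the exponential bound on $\epsilon$ and the multiplicativity of the Laplace transform over independent sums. First I would observe that since $\epsilon(r) \leq C\mathrm{e}^{-\gamma r}$ and $\epsilon$ is non-increasing and nonnegative, we have from (\ref{coef}) that
\[
\mathcal{E}(n) = \int_{\R_+} \epsilon(r)\, \mu^{*n}(dr) \leq C \int_{\R_+} \mathrm{e}^{-\gamma r}\, \mu^{*n}(dr).
\]
The remaining integral is exactly the Laplace transform of $\mu^{*n}$ evaluated at $\gamma$, i.e. $L_{\mu^{*n}}(\gamma)$.

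Next I would invoke the standard fact that the Laplace transform turns convolution into multiplication: since $\mu^{*n}$ is the distribution of $T_n = \tau_1 + \cdots + \tau_n$ with the $\tau_i$ i.i.d. with common law $\mu$, we have $L_{\mu^{*n}}(\gamma) = \E[\mathrm{e}^{-\gamma T_n}] = \prod_{j=1}^n \E[\mathrm{e}^{-\gamma \tau_j}] = (L_\mu(\gamma))^n$. This is a one-line computation using independence. Hence $\mathcal{E}(n) \leq C (L_\mu(\gamma))^n = C\bigl(1/L_\mu(\gamma)\bigr)^{-n}$, which matches the claimed bound.

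Finally I would argue convergence to zero. Since $\mu\{0\} < 1$ by Definition \ref{inter_arrival} and $\gamma > 0$, the function $r \mapsto \mathrm{e}^{-\gamma r}$ is strictly less than $1$ on $(0,\infty)$, so $L_\mu(\gamma) = \int_{\R_+} \mathrm{e}^{-\gamma r}\,\mu(dr) < 1$ (the integrand is $\leq 1$ everywhere and strictly $<1$ on a set of positive $\mu$-measure). Therefore $0 < L_\mu(\gamma) < 1$, so $(L_\mu(\gamma))^n \to 0$ geometrically as $n \to \infty$, and in particular the coefficients $\mathcal{E}(n)$ are finite for every $n$ and converge to zero. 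By Corollary \ref{conclusion}, $Y$ is then $\Psi$-weakly dependent with these coefficients.

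I do not anticipate a genuine obstacle here; the only point requiring a little care is verifying the strict inequality $L_\mu(\gamma) < 1$ rather than merely $\leq 1$, which is where the hypothesis $\mu\{0\} < 1$ enters and which is essential for the convergence-to-zero conclusion. One should also note, for cleanliness, that the manipulations are valid because all integrands are nonnegative, so no integrability subtleties arise (the integrals may a priori be $+\infty$, but the bound $C(L_\mu(\gamma))^n$ shows they are in fact finite).
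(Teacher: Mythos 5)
Your proposal is correct and follows essentially the same route as the paper's proof: bound $\epsilon(r)$ by $C\mathrm{e}^{-\gamma r}$, recognize the resulting integral as $L_{\mu^{*n}}(\gamma)=(L_\mu(\gamma))^n$, and use $L_\mu(\gamma)<1$ to conclude. The only (minor, welcome) difference is that you spell out the multiplicativity via independence and the strict inequality $L_\mu(\gamma)<1$ via $\mu\{0\}<1$, where the paper cites a reference and states the inequality without proof.
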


\begin{proof}[Proof of Proposition \ref{Laplace}]
	We notice that $L_{\mu}(t) <1$ for $t>0$ and that $L_{\mu^{*n}}(t)=(L_{\mu}(t))^n$, cf. \cite[Proposition 2.6]{S13}.
	
	Using the result obtained in Theorem \ref{main}, we have that
	\[
	\mathcal{E}(n)=\int_{\R_+} \epsilon(r) \, \mu^{*n}(dr) \leq C \int_{\R_+} \mathrm{e}^{-\gamma r} \, \mu^{*n}(dr) = C L_{\mu^{*n}}(\gamma)
	\]
	\[
	=C (L_{\mu}(\gamma))^n.
	\]
\end{proof}

As a direct consequence, if $X$ is $\Psi$-weakly dependent and admits exponentially decaying coefficients, the assumptions of Corollary \ref{conclusion} holds and $Y$ inherits the asymptotic dependence structure of $X$ under renewal sampling.

\begin{Example}
	\label{ciao}
	If we have a renewal sampling with $\Gamma(\alpha,\beta)$-distributed inter-arrival times for
	$\alpha, \beta >0$, then $\mu^{*n}$ is the distribution function of a $\Gamma(n\alpha, \beta)$ distributed random variable. By Proposition \ref{Laplace}, 
	\[
	\mathcal{E}(n)=\int_{\R_+} \epsilon(r) \, \mu^{*n}(dr) \leq C \int_{(0,+\infty)} \mathrm{e}^{-\gamma r} \frac{\beta^{n\alpha}}{\Gamma(n\alpha)} r^{n\alpha-1} \mathrm{e}^{-\beta r} \, dr = C \Big( \frac{\gamma+\beta}{\beta} \Big)^{-n\alpha}.
	\]
	
	A special case of the coefficients above is obtained for Poisson sampling, i.e., $\mu=Exp(\lambda)$ with $\lambda>0$. In this case, $\mu^{*n}$ is the distribution function of a $\Gamma(n,\lambda)$ distributed random variable, and
	\[
	\mathcal{E}(n)=\int_{\R_+} \epsilon(r) \, \mu^{*n}(dr) \leq C \int_{(0,+\infty)} \mathrm{e}^{-\gamma r} \frac{\lambda^n}{\Gamma(n)} r^{n-1} \mathrm{e}^{-\lambda r} \, dr = C \Big( \frac{\lambda+\gamma}{\lambda} \Big)^{-n}.
	\] 
\end{Example}

\begin{Remark}
	If the process $X$ is $\Psi$-weakly dependent with exponentially decaying coefficients, then the equidistant sampled process $(X_i)_{i \in \Z}$ has exponentially decaying coefficients $\epsilon(n)\leq C \mathrm{e}^{-\gamma n}$	for $\gamma >0$ and $n \in \N$. By using the results in Example \ref{ciao}, we can design renewal sampling schemes such that the process $Y$ has coefficients $\mathcal{E}$ with faster decay rate than the sequence of coefficients $\epsilon$.
	\begin{itemize}
		\item  For $\Gamma(\alpha,\beta)$-distributed inter-arrival times, we obtain that the process $Y$ has  faster-decaying coefficients than $(X_i)_{i \in \Z}$ if the parameters $\alpha, \beta >0$ are chosen such that
		\[
		\Big( \frac{\gamma+\beta}{\beta}  \Big)^{\alpha} \geq \mathrm{e}^{\gamma}.
		\]
		\item In the case of Poisson sampling, the process $Y$ admits faster-decaying coefficients than $(X_i)_{i \in \Z}$ if
		\[
		\lambda \geq \frac{\gamma}{\mathrm{e}^{\gamma}-1}.
		\]

		The fraction appearing at the right-hand side of the inequality is less than $1$ for all $\gamma >0$. Therefore, because the average length of two adjacent observations is ruled by $\E[\tau_1]=\frac{1}{\lambda}$, we can design an (on average) lower sampling frequency scheme such that the coefficients $\mathcal{E}$ decay faster than $\epsilon$, by choosing $\frac{\gamma}{\mathrm{e^{\gamma}}-1} \leq \lambda < 1$.
		
	\end{itemize}
\end{Remark}

\subsection{Power decay}
\label{powerdecay}
We now assume that the underlying process $X$ is $\Psi$-weakly dependent with coefficients $\epsilon(r)\leq C r^{-\gamma}$ for $\gamma>0$.

We start with some concrete examples of inter-arrival time distributions $\mu$ (and therefore of renewal sampling sequences $T$), preserving the power decay of the coefficients $\epsilon$.

\begin{Example}
	\label{gamma}
	Let us consider renewal sampling with $\Gamma(\alpha,\beta)$ distributed inter-arrival times for $\alpha, \beta >0$. Then, $\mu^{*n}$ is a $\Gamma(n\alpha, \beta)$ distribution. Thus, 
	\begin{equation}	
		\label{tricomi}
		\mathcal{E}(n)=\int_{\R_+} \epsilon(r) \, \mu^{*n}(dr) \leq C \int_{(0,+\infty)} r^{-\gamma} \frac{\beta^{n\alpha}}{\Gamma(n\alpha)} r^{n\alpha-1} \mathrm{e}^{-\beta r} \, dr = C \beta^{\gamma} \frac{\Gamma(n\alpha-\gamma)}{\Gamma(n\alpha)}.
	\end{equation}
	For $n \to \infty$, and applying Stirling's series, see \cite{TE51}, we obtain that (\ref{tricomi}) is equal to $C \beta^{\gamma} n^{-\gamma} + O(n^{-\gamma-1}).$
	
	In the particular case of Poisson sampling, $\mu^{*n}$ is a $\Gamma(n,\lambda)$ distribution and 
	\[
	\mathcal{E}(n)=\int_{\R_+} \epsilon(r) \, \mu^{*n}(dr) \leq C \int_{(0,+\infty)} r^{-\gamma } \frac{\lambda^n}{\Gamma(n)} r^{n-1} \mathrm{e}^{-\lambda r} \, dr = C \lambda^{\gamma} \frac{\Gamma(n-\gamma)}{\Gamma(n)}\]
	\[
	=C \lambda^{\gamma} n^{-\gamma}(1+O(n^{-1})) = C \lambda^{\gamma} n^{-\gamma} + O(n^{-\gamma-1}),
	\]
	where the last equality holds as $n \to \infty$. 
\end{Example}

\begin{Example}
	\label{levy}
	We denote by $Levy(0,c)$ a L\'evy distribution, cf. pg. 28 \cite{Z86}, with location parameter $0$ and scale parameter $c$ (a completely skewed $\frac{1}{2}$-stable distribution). This distribution has infinite mean and variance. For $Levy(0,c)$ distributed inter-arrival times, we have that $\mu^{*n}$ is $Levy(0,cn)$. Thus,
	\[
	\mathcal{E}(n)=\int_{\R_+} \epsilon(r) \, \mu^{*n}(dr)\leq C\int_{\R_+} r^{-\gamma} \, \frac{(\frac{cn}{2})^{\frac{1}{2}}}{\Gamma(\frac{1}{2})} r^{-\frac{3}{2}} \mathrm{e}^{-\frac{cn}{2r}} \, dr = C  \frac{\Gamma(\frac{1}{2}+\gamma)}{(\frac{cn}{2})^{\gamma}\Gamma(\frac{1}{2})}= C \frac{\Gamma(\frac{1}{2}+\gamma)}{\Gamma(\frac{1}{2})} \Big(\frac{c}{2}\Big)^{-\gamma} n^{-\gamma}.
	\]
\end{Example}

\begin{Example}
	\label{igauss}
	We consider now the case where $\mu$ is an inverse Gaussian distribution with mean $m$ and shape parameter $\lambda$ (short  $IG(m,\lambda)$). We have that $\mu^{*n}$ is a $IG(nm,n^2\lambda)$ distribution and
	\begin{align}
		\mathcal{E}(n)= \int_{\R_+} \epsilon(r) \, \mu^{*n}(dr) &\leq C\int_{(0,+\infty)} r^{-\gamma} \Big( \frac{n^2 \lambda}{2\pi r^3} \Big)^{\frac{1}{2}} 
		\mathrm{e}^{-\frac{n^2\lambda(r-nm)^2}{2n^2m^2r}} \, dr \nonumber \\
		&= n C \Big( \frac{\lambda}{2\pi}  \Big)^{\frac{1}{2}} \mathrm{e}^{\frac{\lambda n}{m}} \int_{(0,+\infty)} r^{-\gamma-\frac{3}{2}} 
		\mathrm{e}^{-\frac{\lambda n}{2 m}\Big( \frac{r}{nm}+\frac{nm}{r}\Big)} \, dr \nonumber \\
		&= C \Big( \frac{\lambda}{2\pi}  \Big)^{\frac{1}{2}} m^{-\gamma-\frac{1}{2}} \, n^{-\gamma+\frac{1}{2}} \,\mathrm{e}^{\frac{\lambda n}{m}} \, 2 \, \mathcal{K}_{-\gamma-\frac{1}{2}}\Big(\frac{\lambda n}{m}\Big) \label{bessel} 
	\end{align}
	after applying the substitution $x :=\frac{r}{nm}$ and where $\mathcal{K}_{-\gamma-\frac{1}{2}}$ denotes a modified Bessel function of the third kind with order $-\gamma-\frac{1}{2}$.
	Using the asymptotic expansion for modified Bessel functions at pg. 171 in \cite{J82}, we obtain that  $\mathcal{K}_v(x)= (\frac{\pi}{2})^{\frac{1}{2}} x^{-\frac{1}{2}} \mathrm{e}^{-x}(1+O(x^{-1}))$. Thus, for $n \to \infty$, (\ref{bessel}) is equal to $\frac{C}{2}  m^{-\gamma}\, n^{-\gamma}+ O(n^{-\gamma-1}).$
	
\end{Example}

\begin{Example}
	\label{bern}
	Let the inter-arrival times follow a Bernoulli distribution with parameter $0 \leq p \leq 1$. Then, $\mu^{*n}$ is a $Bin(n, p)$ distribution. If $X$ admits coefficients $\epsilon(r)=C(1 \wedge r^{-\gamma})$ for $\gamma>0$, we have that $\mathcal{E}(n)$  	
	\begin{equation}
		\mathcal{E}(n)= \int_{\R_+} \epsilon(r) \, \mu^{*n}(dr) = C \Big( (1-p)^n + \sum_{j=1}^n j^{-\gamma} {n\choose j} p^j (1-p)^{n-j} \Big) \label{binom}.
	\end{equation}
	For $n \to \infty$, applying the asymptotic expansion proved in Theorem 1 by \cite{WW08}, we have that (\ref{binom}) is equal to $C (np)^{-\gamma} + O(n^{-\gamma-1})$.
\end{Example}

\begin{Example}
	\label{mass}
	Let us consider inter-arrival times such that $\mu([0,k))=0$ for a fixed $k >0$. Then, straightforwardly
	\[
	\mathcal{E}(n)=\int_{\R_+} \epsilon(r) \, \mu^{*n}(dr) \leq C (nk)^{-\gamma}.
	\] 
	
\end{Example}

\noindent
In Examples \ref{gamma}, \ref{igauss}, and \ref{bern} we obtain asymptotic bounds for the coefficients $\mathcal{E}$ whereas we have exact ones in Examples \ref{levy} and \ref{mass}. For a general inter-arrival time distribution we can just show that the coefficients $\mathcal{E}$ decay at least (asymptotically) with the same power. This result relies on the following Lemma.

\begin{Lemma}
	\label{lem}
	Let $\mu, \nu$ be two probability measures on $\R^{+}$ such that $\mu([0,b)) \leq \nu([0,b))$ for all $b >0$ and $f: \R^{+} \to \R^{+}$ be non-increasing. Then
	\[
	\int_{\R_+} f(r) \mu^{*n}(dr) \leq \int_{\R_+} f(r) \, \nu^{*n}(dr).
	\]
\end{Lemma}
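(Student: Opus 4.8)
The plan is to reduce the $n$-fold convolution statement to a one-step statement about stochastic ordering, and then bootstrap. Recall that the hypothesis $\mu([0,b)) \leq \nu([0,b))$ for all $b > 0$ says precisely that $\mu$ stochastically dominates $\nu$ (the CDF of $\mu$ lies below that of $\nu$); equivalently, there is a coupling $(U,V)$ on a common probability space with $U \sim \mu$, $V \sim \nu$ and $U \geq V$ almost surely. Since $f$ is non-increasing, $f(U) \leq f(V)$ almost surely, hence $\int f\,d\mu = \E[f(U)] \leq \E[f(V)] = \int f\,d\nu$. This is the base case $n=1$ (the case $n=0$ is trivial since $\mu^{*0} = \nu^{*0} = \delta_0$).

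For the inductive step I would build a coupling of the $n$-fold convolutions directly. Take i.i.d. pairs $(U_1,V_1),\ldots,(U_n,V_n)$, each distributed as the coupling above, so that $U_i \geq V_i$ a.s. for every $i$; then $S_n := \sum_{i=1}^n U_i \sim \mu^{*n}$, $R_n := \sum_{i=1}^n V_i \sim \nu^{*n}$, and $S_n \geq R_n$ almost surely. Applying the monotonicity of $f$ once more gives $f(S_n) \leq f(R_n)$ a.s., and taking expectations yields
\[
\int_{\R_+} f(r)\,\mu^{*n}(dr) = \E[f(S_n)] \leq \E[f(R_n)] = \int_{\R_+} f(r)\,\nu^{*n}(dr),
\]
which is the claim. (One could equally phrase this inductively: convolution preserves stochastic order, i.e. if $X \geq_{st} X'$ and $Y \geq_{st} Y'$ with $X \perp Y$ and $X' \perp Y'$ then $X+Y \geq_{st} X'+Y'$; iterating $n$ times gives $\mu^{*n} \geq_{st} \nu^{*n}$, and then integrating the non-increasing $f$ against stochastically ordered measures gives the inequality. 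The explicit coupling above just makes this self-contained.)

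The only point requiring a little care — and the main obstacle, such as it is — is the construction of the monotone coupling of $\mu$ and $\nu$ from the CDF inequality, together with a measurability remark ensuring the expectations are well defined. The standard device is the quantile (inverse-CDF) coupling: on $([0,1],\mathrm{Leb})$ set $U = F_\mu^{-1}(\omega)$ and $V = F_\nu^{-1}(\omega)$ with $F_\mu^{-1}(t) = \inf\{x : F_\mu(x) \geq t\}$; since $F_\mu \leq F_\nu$ pointwise we get $F_\mu^{-1} \geq F_\nu^{-1}$ pointwise, hence $U \geq V$ a.s., and each has the correct law. Because $f \geq 0$ and measurable, $\E[f(U)]$ and $\E[f(V)]$ are well-defined elements of $[0,\infty]$ and the inequality holds in $[0,\infty]$ regardless of finiteness, which is all that is needed (in the application $f = \epsilon$ is bounded, so everything is finite anyway). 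No further subtleties arise; the i.i.d. product coupling in the inductive step is immediate once the one-step coupling is in hand.
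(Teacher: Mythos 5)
Your proof is correct, and it takes a genuinely different (and more explicit) route than the paper, whose entire argument is the single sentence ``the proof follows by applying measure-theoretic induction.'' The intended argument behind that sentence is presumably the standard machine: first establish $\mu^{*n}([0,b)) \leq \nu^{*n}([0,b))$ for all $b$ by induction on $n$ (writing the $(n+1)$-fold convolution as an integral of the $n$-fold one and using that $s \mapsto \nu^{*n}([0,b-s))$ is non-increasing), and then pass from indicators of down-sets to a general non-increasing $f \geq 0$ via the layer-cake representation $\int f\,d\mu = \int_0^\infty \mu(\{f>t\})\,dt$, noting that each superlevel set of $f$ is an interval of the form $[0,b_t)$ or $[0,b_t]$. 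You instead recognize the hypothesis as stochastic dominance, build the monotone quantile coupling $U = F_\mu^{-1}(\omega) \geq F_\nu^{-1}(\omega) = V$, and sum $n$ independent copies to couple $\mu^{*n}$ with $\nu^{*n}$ directly; monotonicity of $f$ then finishes in one line. The two approaches prove the same thing, but yours avoids the slightly fiddly bookkeeping at atoms and at closed versus half-open superlevel sets that the layer-cake route requires, at the cost of invoking (or, as you do, re-deriving) the equivalence between pointwise CDF domination and the existence of an almost-surely ordered coupling. One small point worth making explicit if you write this up: the hypothesis is stated for the left limits $\mu([0,b))$, so you should note that $F_\mu(b) = \lim_{c \downarrow b}\mu([0,c)) \leq \lim_{c \downarrow b}\nu([0,c)) = F_\nu(b)$, which is the form of the inequality the quantile coupling actually uses. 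With that remark added, your argument is complete and fully rigorous.
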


\begin{proof}[Proof of Lemma  \ref{lem}]
	The proof follows by applying measure-theoretic induction.
\end{proof}

\begin{Proposition}
	\label{Bernoulli}
	Let $X=(X_t)_{t \in \R}$, $Y=(Y_i)_{i \in \Z}$ and $(T_i)_{i \in \Z}$ be as in Theorem \ref{main}. Let us assume that $\epsilon(r)\leq C r^{-\gamma}$ for $\gamma>0$.
	Let $a >0$ be a point in the support of $\mu$ such that $\mu([0,a))>0$, and set $p=\mu([a,\infty])$. Then, the process $Y$ admits coefficients $\mathcal{E}(n)\leq C (n a p)^{-\gamma}$ as $n \to \infty$.
\end{Proposition}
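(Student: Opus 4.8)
The plan is to dominate $\mu$ stochastically by a two-point (Bernoulli-type) measure and then invoke Lemma \ref{lem} together with the computation in Example \ref{bern}. First I would define the probability measure $\nu$ on $\R^+$ by $\nu = (1-p)\,\delta_0 + p\,\delta_a$, where $a>0$ is the point in the support of $\mu$ with $\mu([0,a))>0$ and $p = \mu([a,\infty))$. The key observation is that $\nu$ stochastically dominates... no: we need $\mu([0,b)) \le \nu([0,b))$ for all $b>0$. For $b \le a$ we have $\nu([0,b)) = 1-p = \mu([0,a)) \ge \mu([0,b))$; for $b > a$ we have $\nu([0,b)) = 1 \ge \mu([0,b))$. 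Hence the hypothesis of Lemma \ref{lem} is satisfied with this $\nu$.

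Next, since $\epsilon$ is non-increasing (w.l.o.g.\ by the convention after Definition \ref{gen}) and $r \mapsto C r^{-\gamma}$ is non-increasing on $\R^+$, I would apply Theorem \ref{main} to get $\mathcal{E}(n) = \int_{\R_+} \epsilon(r)\,\mu^{*n}(dr) \le \int_{\R_+} C r^{-\gamma}\,\mu^{*n}(dr)$, and then Lemma \ref{lem} (with $f(r) = C r^{-\gamma}$) to obtain
\[
\mathcal{E}(n) \;\le\; \int_{\R_+} C r^{-\gamma}\,\nu^{*n}(dr).
\]
Now $\nu^{*n}$ is the law of $a B_n$ where $B_n \sim \mathrm{Bin}(n,p)$, so $\int_{\R_+} C r^{-\gamma}\,\nu^{*n}(dr) = C a^{-\gamma}\,\E\big[B_n^{-\gamma}\mathbb{1}_{\{B_n\ge 1\}}\big] + C a^{-\gamma}\cdot(\text{the }B_n=0\text{ term})$. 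The atom at $r=0$ forces a small care: one either assumes $\epsilon(0)<\infty$ (automatic, since $\epsilon$ is a coefficient sequence taking finite values, or one works with $\epsilon(r) = C(1\wedge r^{-\gamma})$ as in Example \ref{bern}) so that the $B_n = 0$ contribution is $(1-p)^n$, which decays exponentially and is $O(n^{-\gamma-1})$. The remaining sum $\sum_{j=1}^n j^{-\gamma}\binom{n}{j}p^j(1-p)^{n-j}$ is exactly the quantity handled in Example \ref{bern}, and by the asymptotic expansion of \cite{WW08} it equals $(np)^{-\gamma} + O(n^{-\gamma-1})$ as $n\to\infty$. Combining, $\mathcal{E}(n) \le C a^{-\gamma}(np)^{-\gamma} + O(n^{-\gamma-1}) = C(nap)^{-\gamma} + o((nap)^{-\gamma})$, i.e.\ $\mathcal{E}(n) \le C(nap)^{-\gamma}$ as $n\to\infty$ after absorbing lower-order terms into the constant.

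The main obstacle is purely bookkeeping around the atom at $0$: $r\mapsto r^{-\gamma}$ blows up there, so one must make sure the $B_n=0$ event (probability $(1-p)^n$) is handled by either the $1\wedge r^{-\gamma}$ truncation already present in the standing hypotheses on $\epsilon$, or by noting that the coefficient sequence is finite-valued so $\epsilon(0)$ is a finite constant; in either case the contribution is exponentially small and harmless. A second minor point is verifying that $\nu$ is a genuine probability measure and that $p>0$ — this follows since $a$ lies in the support of $\mu$ and $\mu([0,a))<1$ (indeed $\mu([0,a))>0$ is assumed but $p = \mu([a,\infty)) \ge \mu(\{a\text{-neighborhoods}\}) > 0$ because $a$ is in the support), so Lemma \ref{lem} applies and the argument closes.
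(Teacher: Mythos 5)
Your argument is correct and takes essentially the same route as the paper's proof: the paper likewise dominates $\mu$ by the two-point measure $\nu = p\,\delta_a + (1-p)\,\delta_0$, invokes Lemma \ref{lem}, and reduces to the binomial computation of Example \ref{bern} together with the expansion of \cite{WW08}. Your explicit handling of the atom at $0$ (via the $1\wedge r^{-\gamma}$ truncation, so the $B_n=0$ event contributes only the exponentially small term $(1-p)^n$) is a detail the paper's proof treats implicitly.
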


\begin{proof}[Proof of Proposition \ref{Bernoulli}]
	Let us assume w.l.o.g. that $\mu\neq \delta_a$ (otherwise Example \ref{mass} applies for any $a \in \R_{+}$), where $\delta_a$ denotes the Dirac-delta measure for $a \in \R_+$. Set $\nu= p \delta_{a}+(1-p) \delta_0$. The latter is a Bernoulli distribution that assigns probability $p$ to the inter-arrival time $a$ and $(1-p)$ to the one $0$. It follows that $\mu([0,b))\leq \nu([0,b))$ for all $b >0$. Then, by using Lemma \ref{lem}, the result in Example \ref{bern} and Theorem 1 in \cite{WW08}
	\begin{align*}
		\mathcal{E}(n) \leq C \int_{\R_+} r^{-\gamma} \mu^{*n}(dr) \leq C \int_{\R_+} r^{-\gamma} \nu^{*n}(dr) &=  C \Big( (1-p)^n + \sum_{j=1}^n (a j)^{-\gamma} {n\choose j} p^j (1-p)^{n-j} \Big) \\
		&= C (n a p)^{-\gamma} + O(n^{-\gamma- 1}),
	\end{align*}
	where the last inequality holds for $n \to \infty$.
\end{proof}

\begin{Remark}
Proposition \ref{Bernoulli} gives us an upper bound for the coefficients $\mathcal{E}$. This means that the 
true decay of the coefficients $\mathcal{E}$ could be faster, in general, than $n^{-\gamma}$. 
However, we have not found examples of sequences $\tau$ where this happens.  
Even for extremely heavily tailed inter-arrival time distributions like in Example \ref{levy}, we can just find an estimate from above of the coefficients of the renewal sampled process $Y$, i.e., $\mathcal{E}(n) \leq C n^{-\gamma}$ for large $n$, that has the same power decay as the coefficients $\epsilon$.
\end{Remark}

\noindent
Proposition \ref{Bernoulli} summarizes the results given in this section.
In fact, as long as $X$ is $\Psi$-weakly dependent such that there exists a $\gamma >0$ with $\epsilon(r) \leq C r^{-\gamma}$ then the assumptions of Corollary \ref{conclusion} are satisfied and $Y$ inherits the asymptotic dependence structure of $X$.
Note that Proposition \ref{Bernoulli} assures that $Y$ is $\Psi$-weakly dependent also when, for example, $\epsilon(r)=C\frac{1}{r log(r)}$ and then $\epsilon(r) \leq C n^{-1}$. 
Therefore, caution has to be exercised when checking conditions of type (\ref{suff}) for the process $Y$. 

\begin{Example}
	\label{special}
	Let us consider the sufficient condition for the applicability of the central limit theorem for $\kappa$-weakly dependent processes, see \cite{DW07}, where (\ref{suff}) holds with $A(\delta)=1$.
	If $X$ is a $\Psi$-weakly dependent process with coefficients $\epsilon(r)=C\frac{1}{r log(r)}$, then $Y$ is a $\Psi$-weakly dependent process with coefficients $\mathcal{E}(n) \leq \tilde{C} n^{-1}$ as $n \to \infty$ by applying Proposition \ref{Bernoulli}.
	We have that the coefficients $\epsilon(r)$ are summable and satisfy (\ref{suff}), but we do not know the summability of the coefficients $\mathcal{E}(n)$ as Proposition \ref{Bernoulli} just gives an upper bound of their value which is not summable.
\end{Example}

\section{Conclusion}
\label{sec4}

We assume that our sampling scheme is described by a renewal sequence $T$ independent of the process $X$ being weakly dependent or $\alpha$-mixing. We determine under which assumptions the process $Y=(X_{T_i},T_i-T_{i-1})$ is itself weakly dependent or $\alpha$-mixing. If $X$ admits exponential or power decaying coefficients, $Y$ inherits strong mixing or weak dependence, and its related coefficients preserve the exponential or power decay (at least asymptotically). Our general results enable the application of central limit theorems heavily used under equidistant sampling schemes to renewal sampled data.

Other sampling schemes are of great interest in practical applications and constitute a natural continuation of our work. For instance, sampling schemes where $T$ is a point process dependent on $X$, as observed in transaction-level financial data. Moreover, when analyzing data from continuous spatio-temporal random fields, the theory we have developed so far allows analyzing sampling along a self-avoiding walk that moves in non-negative coordinate directions. Another possible extension of our theory aims to study the random field sampling along a walk that moves in lexicographically increasing coordinate directions. 

\section*{Acknowledgment}
We would like to express our gratitude to the two anonymous reviewers and the editors for their many insightful comments and suggestions.

\bibliographystyle{apa}

\bibliography{preservation_Brandes_Curato_Stelzer_revision}

\end{document}